\documentclass[12pt]{amsart}
\usepackage{graphicx} % Required for inserting images
\usepackage{amsthm}
\usepackage{amssymb}
\usepackage{mathtools}
\usepackage{MnSymbol}
\usepackage{xcolor}
\usepackage{bbm}
\usepackage[normalem]{ulem}
\DeclareMathAlphabet{\mymathbb}{U}{bbold}{m}{n}
\usepackage{dsfont}
\usepackage{hyperref}
\usepackage{amssymb}
\usepackage{enumitem}
\usepackage[T1]{fontenc}
\usepackage{bbm}
\usepackage{verbatim}
\usepackage{mathrsfs}
\usepackage{dutchcal}
\DeclareMathAlphabet{\standardcal}{OMS}{cmsy}{m}{n}

\newtheorem{theorem}{Theorem}[section]
\newtheorem{corollary}[theorem]{Corollary}
\newtheorem{lemma}[theorem]{Lemma}

\newtheorem{fact}[theorem]{Proposition}

\theoremstyle{definition}
\newtheorem{definition}[theorem]{Definition}
\newtheorem{remark}[theorem]{Remark}
\newtheorem{example}[theorem]{Example}

\newcommand{\mc}{\mathcal}

\newcommand{\x}{\mathbbm{x}}
\newcommand{\y}{\mathbbm{y}}

\author{}
\title{Expansiveness of vertical subgroups of the Heisenberg group% / Vertical expansive subdynamics of actions of the Heisenberg group
}

\author[M. Prusik]{Michał Prusik}
\address{Faculty of Pure and Applied Mathematics\\ Wroclaw University of Science and Technology\\
Wybrzeże Wyspiańskiego 27\\
50-370 Wrocław Poland}
\email{michal.prusik@pwr.edu.pl}

\keywords{Expansiveness, Heisenberg group, group action, expansive subdynamics}

\subjclass[2020]{Primary 37B05; Secondary 22E40, 22E25}

\begin{document}
    \begin{abstract}
       %In this paper, we attempt to transfer the ideas proposed by Boyle and Lind concerning the expansiveness of directions in the acting group $\mathbb Z^d$ for the case of the actions of the Heisenberg group. We assume the action via homeomorphisms of the discrete Heisenberg group $H$ on a compact metric space $(\mathbbm X,\rho)$, and we embed this group in the (continuous) $2D+1$-dimensional Heisenberg group $\mc H$. We present the definition of expansive subsets of $\mc H$. Finally, as the main results, we study the expansiveness of vertical subgroups of the Heisenberg group. In particular, we show that, if only the space $\mathbbm X$ is infinite, the center of $\mc H$ cannot be expansive, and that there always exists at least one nonexpansive $2D$-dimensional vertical subgroup.
%
    In the paper we study expansiveness along distinguished subsets in the case of a continuous action of the discrete Heisenberg group on a compact metric space $(\mathbbm X,\rho)$. Transferring the ideas proposed by Boyle and Lind for continuous actions of $\mathbb{Z}^D$, we embed the acting group in the (continuous) $(2D+1)$-dimensional Heisenberg group $\mc H$ and define expansive subsets of $\mc H$. We focus on the expansiveness of vertical subgroups of the Heisenberg group. In particular, we show that, if only the space $\mathbbm X$ is infinite, the center of $\mc H$ cannot be expansive, and that there always exists at least one nonexpansive $2D$-dimensional vertical subgroup.
    \end{abstract}
	
	\maketitle	
    \section{Introduction}

    In the late 1980s, Milnor proposed the study of multidimensional cellular automata by analyzing their dynamical behavior when restricted to distinguished \emph{directions}. In his paper \cite{M}, the lattice $\mathbb Z^D$, on which an automaton is defined, is embedded in the space $\mathbb R^D$, and the role of the directions is played by linear subspaces of this continuous space, regardless of whether they intersect $\mathbb Z^D$ non-trivially or not. Milnor introduced and studied the notion of \emph{directional entropy} measured along such subspaces. In 1997, his work was extended by Boyle and Lind \cite{BL} to a continuous action of $\mathbb Z^D$ on an infinite compact metric space. They focused on the notion of expansiveness of the directions and defined \emph{expansive components}, which are connected components of expansive directions of a fixed dimension. They showed that several dynamical properties of the directions, including Milnor's directional entropy, vary nicely within these components, and a ``phase transition'' can be observed between them. 
    
    One of the main results given by Boyle and Lind is that if only the underlying space $\mathbbm X$ is infinite, there always exists at least one $D-1$-dimensional nonexpansive subspace, and that the set of $k$-dimensional nonexpansive subspaces $\mathbb N_k$ is compact in the Grassmann manifold $\mathbb G_k$ (equivalently, that the set of expansive subspaces $\mathbb E_k = \mathbb G_k\setminus \mathbb N_k$ is open), for any dimension $k<D$. For the case of $D=2$, they also constructed examples realizing any compact subset of $\mathbb G_1$ as $\mathbb N_1$, with the exception of a singleton containing one irrational line, which was completed later by Hochman \cite{H}. At the end of \cite{BL}, the authors left a list of open problems, which they end with a question: ``(...) we have been considering the lattice $\mathbb Z^d$ in $\mathbb R^d$, and considering the subdynamics of closed subgroups in $\mathbb R^d$. What generalizes to lattices in Lie groups?'' Motivated by this remark, we aim to study subdynamics of actions of more general groups.  Let us recall that an action of a group $G$ on a (topological) space $\mathbbm X$ is a map $\varphi: G\times \mathbbm X \to \mathbbm X$ such that $\varphi(g_1g_2,\cdot) = \varphi(g_1,\cdot)\circ \varphi(g_2,\cdot)$ and $\varphi(g^{-1},\cdot) = \varphi(g,\cdot)^{-1}$ hold for each $g,g_1,g_2\in G$. $G$ \emph{acts via homeomorphisms} if for each $g\in G$ its action $\varphi(g,\cdot)\in \text{Homeo}(\mathbbm X)$. It seems reasonable to start the investigation with the case of the $2D+1$-dimensional Heisenberg group. We generalize some of the results from \cite{BL} for general subsets of this group, and then provide a detailed analysis of the expansiveness of \emph{vertical groups}, which are the closest to the euclidean subspaces of $\mathbb R^D$. In particular, we show that if the space $\mathbbm X$ is infinite, the center of the group is never expansive (Theorem \ref{Z_never_expansive}), and that there is always a $2D$-dimensional nonexpansive vertical group (Theorem \ref{nonexp2D}).

    The article is organized as follows: the following two sections provide basic information about the Heisenberg group, its structure, and its topology. We suspect that most of the presented facts are well known, but some of them were hard to find in the literature, at least in the language presented here. Therefore, we provide their proofs to keep the paper self-contained. Then we move on to studying the action of the discrete Heisenberg group $H$ on a compact metric space $(\mathbbm X, \rho)$. We assume that $H$ acts on $\mathbbm X$ via homeomorphisms. The fourth section starts the general analysis of the expansiveness of the subsets of the Heisenberg group. In the last section, we provide the main results concerning the vertical groups.
    
    I want to thank my PhD supervisor, Bartosz Frej, for his guidance during my work and for helping me with the preparation of this paper. 

	\section{The Heisenberg Group}
	
	Let $D$ be a positive integer. Let $\{e_i, f_i\}_{i=1}^{D}$ be the standard basis in $\mathbb R^{2D}$. For $v\in\mathbb R^{2D}$, by $p_i, q_i$ we denote its coordinates in this basis, i.e., $v = \sum_{i=1}^{D} (p_i e_i + q_i f_i)$. We define the skew-symmetric bilinear form $\omega:\mathbb R^{2D} \to \mathbb R$ as follows:
	\begin{equation*}
		\omega(v,\tilde {v}) = \sum_{i=1}^{D} (p_i \tilde q_i - q_i \tilde p_i) \text{, for each } v,\tilde v\in \mathbb R^{2D},
	\end{equation*}
	where $p_i,q_i$ and $\tilde p_i, \tilde q_i$ are the coordinates of $v$ and $\tilde v$, respectively. For each $i,j = 1,\ldots,D$ we have $\omega(e_i,e_j) = \omega(f_i,f_j) = 0$ and $\omega (e_i,f_j) = \delta_{ij}$, where $\delta_{ij}$ is the Kronecker delta.
	\begin{definition}
	    \emph{The Heisenberg group} is the pair $(\mc H, \cdot)$, where $\mc H=\{(v,u): v\in\mathbb R^{2D}, u\in \mathbb R\}$ and the group operation is given by:
        \begin{equation*}
		(v\,u) \cdot (\tilde v, \tilde u) = (v + \tilde v, u + \tilde u + \frac12 \omega(v, \tilde v)).
	\end{equation*}
	\end{definition}
    We denote its neutral element $(0,0)$ by $\mathbf 0$. This group is often represented (isomorphically) as the set of all upper triangular matrices with the operation of matrix multiplication. We will thus stick to multiplicative notation for the group, especially since the operation is not commutative. On the other hand, let us note that
	$(v,u)^{-1} = (-v,-u)$ and $(v,u)^n = (nv, nu)$  for each $n \in\mathbb N$, 
	hence 
    \begin{equation}    \label{eq:additive}
    (v,u)^n = (nv,nu) \qquad \textrm{for every } n\in\mathbb Z.
    \end{equation}
    In $\mc H$ we distinguish the following elements: $x_i = (e_i, 0)$ and $y_i = (f_i, 0)$, for $i=1,\ldots ,D$, and  $z = (0, 1)$. For each $i=1,\ldots,D$ we have $z = x_iy_ix_i^{-1}y_i^{-1}$. Moreover, for each $i\neq j$, we have $x_i x_j = x_j x_i$, $y_i y_j = y_j y_i$, and $x_iy_j = y_jx_i$. In general, for any $g=(v_g, u_g)$ and $h=(v_h, u_h)$, we have the formula
    \begin{equation}
        \label{eq:gh=hg(0,omega)}
         gh = hg (0,\omega(v_g, v_h)).
    \end{equation}
	
	For $A\subset \mc H$, by $\langle A \rangle$ we denote the subgroup generated by $A$.
	\begin{definition}
		Let $H = \langle \{x_i, y_i: i=1,\ldots,D\} \rangle$. We call $H$ the \emph{discrete Heisenberg group}. 
	\end{definition}
    The discrete Heisenberg group $H$ can be explicitly written as:
    \begin{equation}
    \label{form_of_discrete_H}
        H = \left\{(v,u)\in\mc H: v = \sum_{i=1}^D (p_ie_i + q_if_i)\in\mathbb Z^{2D} \textrm{ and } u\in \mathbb Z+\frac12 \sum_{i=1}^Dp_iq_i\right\}.
    \end{equation}
    Hence, the last coordinate $u$ is either an integer or an integer plus $\frac12$, depending on the first coordinate $v$, which is always an integer vector.
	
	We call the subgroup $\mc Z = \{uz = (0,u): u\in\mathbb R\}$ the \emph{vertical axis}. It is easy to show that $\mc Z$ is the center of the Heisenberg group and that the discrete vertical axis $Z = \langle \{z\} \rangle$ is the center of the discrete group $H$.     
    In particular, $\mc Z$ and $Z$ are normal subgroups of $\mc H$ and $H$, respectively. We make the following two observations concerning $\mc Z$.
    \begin{lemma}
    \label{AZ=Z_implies_Ag=gA}
        Let $A\subset \mc H$ be such that $A\mc Z = A$. Then for any $g\in \mc H$ we have $Ag = gA$. Moreover, for any $g_1,g_2\in \mc H$ we have $Ag_1g_2 = Ag_2g_1$.
    \end{lemma}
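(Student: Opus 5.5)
The plan is to use formula \eqref{eq:gh=hg(0,omega)}, which says that any two elements commute up to a central factor, together with the hypothesis $A\mathcal Z = A$, which says $A$ absorbs central factors.

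\textbf{Step 1: $gA \subset Ag$.} Fix $g=(v_g,u_g)\in\mathcal H$ and $a=(v_a,u_a)\in A$. Applying \eqref{eq:gh=hg(0,omega)} with the roles $g\mapsto g$, $h\mapsto a$ gives
\[
ga = ag\,(0,\omega(v_g,v_a)).
\]
Since $(0,\omega(v_g,v_a))\in\mathcal Z$ is central, it commutes with $g$, so
\[
ga = a\,(0,\omega(v_g,v_a))\,g.
\]
Now $a\,(0,\omega(v_g,v_a))\in A\mathcal Z = A$, hence $ga\in Ag$. Thus $gA\subset Ag$.

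\textbf{Step 2: $Ag\subset gA$.} The reverse inclusion is symmetric. Write $ag = ga\,(0,\omega(v_a,v_g))$, move the central factor next to $a$, and use $A\mathcal Z = A$ again. Alternatively, apply Step 1 to $g^{-1}$ and multiply by $g$ on both sides. Steps 1 and 2 together give $Ag=gA$.

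\textbf{Step 3: $Ag_1g_2 = Ag_2g_1$.} Apply \eqref{eq:gh=hg(0,omega)} to get $g_1g_2 = g_2g_1\,c$ with $c=(0,\omega(v_{g_1},v_{g_2}))\in\mathcal Z$. Since $c$ is central,
\[
Ag_1g_2 = A\,c\,g_2g_1.
\]
Because $Ac\subset A\mathcal Z = A$ and $Ac^{-1}\subset A$, we have $Ac = A$. Therefore $Ag_1g_2 = Ag_2g_1$.

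There is no real obstacle. The only point needing care is that $A\mathcal Z = A$ gives $Ac = A$ for each individual $c\in\mathcal Z$, and not just an inclusion. This holds because $\mathcal Z$ is a group: $A\subset Ac^{-1}c\subset Ac$, since $Ac^{-1}\subset A$.
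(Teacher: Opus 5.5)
Your proof is correct and follows essentially the same route as the paper: both inclusions in $Ag = gA$ come from formula \eqref{eq:gh=hg(0,omega)}, together with the centrality of $\mathcal Z$ and the absorption $A\mathcal Z = A$. The only small difference is in the second claim: the paper notes that $Ag_1$ again satisfies $(Ag_1)\mathcal Z = Ag_1$ and chains the first claim ($Ag_1g_2 = g_2Ag_1 = g_2g_1A = Ag_2g_1$), whereas you apply the commutator formula directly to $g_1g_2$ and use $Ac = A$; both arguments are equally short and valid.
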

    \begin{proof}
        By formula \eqref{eq:gh=hg(0,omega)} we have $Ag\subset gA\mc Z= gA$ and $gA\subset Ag\mc Z$. But 
        $$Ag\mc Z = A\mc Z g = Ag.$$ 
        For the second statement, notice that if $A\mc Z = A$, then by the calculation above, $Ag$ also has this property. Therefore, we have:
        \begin{equation*}
            Ag_1g_2 = g_2(Ag_1) = g_2(g_1A) = g_2g_1A = Ag_2g_1.
        \end{equation*}
    \end{proof}

	\begin{lemma}
		\label{normality_characterization}
		A subgroup $\mc G$ of $\mc H$ is normal if and only if $\mc{G}$ contains~$\mc Z$.
	\end{lemma}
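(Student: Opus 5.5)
The plan is to prove both directions directly from the commutation formula \eqref{eq:gh=hg(0,omega)}, which gives, for any $g,h\in\mc H$,
\begin{equation*}
ghg^{-1} = h\,(0,\omega(v_g,v_h)) \qquad\text{and}\qquad ghg^{-1}h^{-1} = (0,\omega(v_g,v_h)).
\end{equation*}
Here I use that $(0,\cdot)$ is central, so the factor $(0,\omega(v_g,v_h))$ can be moved freely.

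For the ``if'' direction, assume $\mc Z\subset \mc G$. For $h\in\mc G$ and $g\in\mc H$, the conjugate $ghg^{-1}$ is $h$ times an element of $\mc Z$. Hence it lies in $\mc G\mc Z=\mc G$. So $g\mc G g^{-1}\subset \mc G$ for every $g$, which means $\mc G$ is normal. Alternatively, this follows at once from Lemma \ref{AZ=Z_implies_Ag=gA}: $\mc G\mc Z=\mc G$ gives $g\mc G=\mc G g$.

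For the ``only if'' direction, assume $\mc G$ is normal. The key step is to produce an element $h\in\mc G$ with $v_h\neq 0$. For every $g\in\mc H$, normality gives that the commutator $ghg^{-1}h^{-1}=(0,\omega(v_g,v_h))$ lies in $\mc G$. Since $\omega$ is nondegenerate and $v_h\neq 0$, the linear functional $v\mapsto\omega(v,v_h)$ is nonzero, so it maps $\mathbb R^{2D}$ onto $\mathbb R$. Concretely, pick a coordinate of $v_h$ that is nonzero, take $v_g$ to be a suitable multiple of the symplectically paired basis vector, and obtain $(0,u)\in\mc G$ for every $u\in\mathbb R$. Thus $\mc Z\subset\mc G$.

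The main obstacle is the case in which no such $h$ exists, i.e. $\mc G\subset\mc Z$. The statement cannot hold as written in that case. Every subgroup of the center is normal, for example the trivial group or the discrete vertical axis $Z$, yet such a subgroup need not contain $\mc Z$. I would therefore add the hypothesis that $\mc G\not\subset\mc Z$; with this hypothesis the argument above is complete. Alternatively, the statement could be phrased as ``$\mc G$ is normal if and only if $\mc G\subset\mc Z$ or $\mc Z\subset\mc G$'', which the same argument proves.
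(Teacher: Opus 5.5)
Your proof is correct and is essentially the paper's argument: both directions rest on the conjugation formula $ghg^{-1}=h\,(0,\omega(v_g,v_h))$ from \eqref{eq:gh=hg(0,omega)}. For the ``only if'' direction you both choose $v_g$ so that $\omega(v_g,v_h)$ runs over all of $\mathbb R$; the paper obtains $h\mc Z\subset\mc G$ and then cancels $h$, while you take the commutator directly.

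Your caveat is also right. The paper's proof passes from ``$\mc G\neq\mc Z$'' to ``some element of $\mc G$ has nonzero horizontal part'', and this fails for proper subgroups of $\mc Z$ such as $\{\mathbf 0\}$ or $Z$. These subgroups are normal but do not contain $\mc Z$, so the hypothesis $\mc G\not\subset\mc Z$ (or your reformulation ``$\mc G\subset\mc Z$ or $\mc Z\subset\mc G$'') is genuinely needed.
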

	\begin{proof}
		If $\mc {Z \subset G}$, the normality follows from Lemma \ref{AZ=Z_implies_Ag=gA}.	
		On the other hand, assume that $\mc G$ is normal and that it is not $\mc Z$. Then there is $g=(v_g,u_g)\in\mc{G}$ such that $v_g\neq 0$. 
        Since $v_g$ is nonzero, for any $a\in\mathbb R$ we can find $h=(v_h,0)$ such that $\omega (v_h,v_g)=a$. Then we have $g\cdot (0,a) = (v_g,u_g+a) = hgh^{-1} \in \mc G$. Hence $g\mc Z \subset \mc G$, and therefore $\mc Z = g^{-1}g\mc Z\subset \mc G$. 
		
	\end{proof}
	
	\begin{definition}
		For $r>0$, let $\delta_r:\mc H \to \mc H$ be a map defined by:
		\begin{equation*}
			\delta_r((v,u)) = (r v, r^2 u).
		\end{equation*}
		We call $\delta_r$ the \emph{dilation by $r$}.
	\end{definition}
     For any $g,h\in\mc H$ and $r,s>0$ we have
		\[
			\delta_r(gh) = \delta_r g \cdot \delta_r h \qquad \textrm{and}\qquad
			\delta_r(\delta_s g) = \delta_{rs}g.
		\]   
    
	\begin{definition}
		We say that $A\subset \mc H$ is \emph{dilation invariant} if for any $r>0$, we have $\delta_r A = A$. A dilation invariant subgroup is called \emph{homogeneous}.
	\end{definition}
	
	\begin{fact}
		\label{nonzero_omega_cont_Z}
		If a subgroup $\mc G$ contains two elements $g_1=(v_1,u_1)$ and $g_2=(v_2,u_2)$ such that $\omega(v_1,v_2)\neq 0$, then $\mc G$ contains a non-trivial subgroup of $\mc Z$, namely $\mc \langle (0,\omega(v_1,v_2)) \rangle$.
	\end{fact}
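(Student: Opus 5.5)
The plan is to take the commutator $g_1g_2g_1^{-1}g_2^{-1}$, which lies in $\mc G$ because $\mc G$ is a subgroup, and to show that it equals $(0,\omega(v_1,v_2))$. Once that is done, the cyclic subgroup $\langle(0,\omega(v_1,v_2))\rangle$ is contained in $\mc G$. It is non-trivial because $\omega(v_1,v_2)\neq 0$. It lies in $\mc Z$ because $\mc Z$ is a subgroup; by \eqref{eq:additive} it is explicitly $\{(0,n\,\omega(v_1,v_2)):n\in\mathbb Z\}$.

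To compute the commutator, I use formula \eqref{eq:gh=hg(0,omega)}, which gives $g_1g_2 = g_2g_1(0,\omega(v_1,v_2))$. Multiplying on the right by $g_1^{-1}g_2^{-1}$ and using that $(0,\omega(v_1,v_2))$ is central gives
\[
g_1g_2g_1^{-1}g_2^{-1} = g_2g_1 g_1^{-1}g_2^{-1}(0,\omega(v_1,v_2)) = (0,\omega(v_1,v_2)).
\]
The centrality of elements of $\mc Z$ was noted earlier, and it can also be read directly off the group law, since $\omega(0,\cdot)=0$.

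If one prefers not to rely on \eqref{eq:gh=hg(0,omega)}, the commutator can be computed directly from the definition of the operation, using $g^{-1}=(-v,-u)$. The product $g_1g_2=(v_1+v_2,\,u_1+u_2+\tfrac12\omega(v_1,v_2))$ and the product $g_1^{-1}g_2^{-1}=(-v_1-v_2,\,-u_1-u_2+\tfrac12\omega(v_1,v_2))$ have opposite vector parts. Their product therefore has vector part $0$ and central coordinate $\omega(v_1,v_2)$, because the $\tfrac12\omega(v_1+v_2,-v_1-v_2)$ term vanishes by skew-symmetry.

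There is no real obstacle here. The only point needing care is the bookkeeping of the sign convention in \eqref{eq:gh=hg(0,omega)}, so that the commutator comes out as $(0,\omega(v_1,v_2))$ and not $(0,-\omega(v_1,v_2))$. Either sign generates the same cyclic subgroup, so the conclusion is unaffected.
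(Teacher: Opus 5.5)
Your proposal is correct and follows the paper's proof: the paper likewise just computes the commutator $g_1g_2g_1^{-1}g_2^{-1}=(0,\omega(v_1,v_2))\in\mc G$. Both of your verifications of this identity are right, one via formula \eqref{eq:gh=hg(0,omega)} with centrality and the other directly from the group law.
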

    
	\begin{proof}
		  After simple calculation, we get $g_1g_2g_1^{-1}g_2^{-1} = (0,0,\omega(v_1,v_2))\in \mc G$.
	\end{proof}

    \begin{definition}
        A linear subspace $V\subset \mathbb R^{2D}$ is called \emph{isotropic} if for any $v_1,v_2\in V$ we have $\omega(v_1,v_2)=0$.
    \end{definition}
		
	\begin{definition}
		Let $V\subset \mathbb R^{2D}$ be a linear subspace. 
        \begin{enumerate}
            \item 
        We call a normal subgroup $\mc G\subset \mc H$ of the form $\mc G=V\times \mathbb R$ a \emph{vertical group}. 
        \item A \emph{horizontal group} is a subgroup of $\mc H$ of the form $V\times \{0\}$, where $V$ is isotropic.
        \end{enumerate}
    \end{definition}    
    Both vertical and horizontal groups are homogeneous, but horizontal groups are not normal.
    
	\begin{fact}
		\label{thm_dil_inv_characterization}
		A subgroup $\mc G$ of $\mc H$ is homogeneous if and only if it is either a horizontal or a vertical group.
	\end{fact}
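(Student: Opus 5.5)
The easy direction is immediate. A horizontal group $V\times\{0\}$ with $V$ isotropic is a subgroup, since $\omega$ vanishes on $V$, and it is clearly mapped onto itself by every $\delta_r$. A vertical group $V\times\mathbb R$ is a subgroup because the product of $(v,u)$ and $(\tilde v,\tilde u)$ has first coordinate $v+\tilde v\in V$. It is also preserved by $\delta_r$, since $rV=V$ and $r^2\mathbb R=\mathbb R$. The real work is the converse.

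So let $\mc G$ be a homogeneous subgroup and set $V=\{v:(v,u)\in\mc G \text{ for some } u\}$, the image of $\mc G$ under the projection $\pi:(v,u)\mapsto v$. Since $\pi$ is a homomorphism to $(\mathbb R^{2D},+)$, $V$ is an additive subgroup. It is closed under positive scalars by dilation invariance and under negation by \eqref{eq:additive}, so $V$ is a linear subspace. I then split into two cases according to whether $\mc G\cap\mc Z$ is trivial.

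\emph{Case 1: $\mc G\cap\mc Z\neq\{\mathbf 0\}$.} Take $(0,u_0)\in\mc G$ with $u_0\neq0$. Applying $\delta_r$ gives $(0,r^2u_0)\in\mc G$ for all $r>0$, and inverses give the negative multiples, so $\mc Z\subset\mc G$. Then for any $v\in V$ we have some $(v,u)\in\mc G$, hence $(v,u)\mc Z=\{v\}\times\mathbb R\subset\mc G$. Therefore $\mc G=V\times\mathbb R$, which is vertical; it is normal by Lemma \ref{normality_characterization}.

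\emph{Case 2: $\mc G\cap\mc Z=\{\mathbf 0\}$.} By Proposition \ref{nonzero_omega_cont_Z}, $\omega$ vanishes on $V$, so $V$ is isotropic. Moreover $\pi|_{\mc G}$ is injective, because its kernel is $\mc G\cap\mc Z$. Hence there is a function $\phi:V\to\mathbb R$ with $\mc G=\{(v,\phi(v)):v\in V\}$. The remaining step is to show $\phi\equiv0$, and this is the main point of the proof. Dilation invariance gives $(rv,r^2\phi(v))\in\mc G$, and injectivity then yields $\phi(rv)=r^2\phi(v)$ for $r>0$. On the other hand, \eqref{eq:additive} gives $(nv,n\phi(v))\in\mc G$, so $\phi(nv)=n\phi(v)$. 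Taking $r=n=2$ gives $4\phi(v)=2\phi(v)$, so $\phi(v)=0$. Thus $\mc G=V\times\{0\}$ with $V$ isotropic, which is horizontal.

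If needed, I would add a remark that the two cases are exclusive except for the trivial group, but the statement only asks for "either/or", so this is not required.
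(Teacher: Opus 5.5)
Your proof is correct and is essentially the paper's argument. Both rest on the mismatch between the power $(v,u)^n=(nv,nu)$ and the dilation $\delta_n(v,u)=(nv,n^2u)$, which yields a nonzero central element from any element with nonzero last coordinate, and then dilate that central element to fill out $\mc Z$. You run this contrapositively, casing on whether $\mc G\cap\mc Z$ is trivial and using injectivity of $\pi|_{\mc G}$; this makes explicit the isotropy of $V$ in the horizontal case and the easy direction, which the paper leaves implicit.
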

  
	\begin{proof}
		Assume that $\mc G$ is homogeneous and that it is not a horizontal group. Being a homogeneous group implies that $V=\pi(\mc G)$ is a linear space, so $V\times \mathbb R$ is  a vertical group, and $\mc G\subset V\times \mathbb R$. On the other hand, $\mc G$ contains some $(v,u)$ such that $u\neq 0$. Therefore, it contains $(nv,nu)$ for any $n\in\mathbb N$. But by dilation invariance, it also contains $(nv,n^2u)$. Thus, it contains $(0,(n^2-n)u)$. Using dilation invariance once more, we get that $\mc G$ contains $\mc Z$, so $\mc G = V\times \mathbb R$.
	\end{proof}

	\begin{fact}
		If $\mc G$ is a linear subspace of $\mathbb R^{2D+1}$ such that the linear subspace $\pi(\mc G)$ is isotropic then $\mc G$ is a subgroup.
	\end{fact}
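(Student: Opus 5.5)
The plan is to verify the subgroup axioms directly from the multiplication formula, using isotropy to remove the correction term. Here $\pi:\mc H\to\mathbb R^{2D}$ is the projection $(v,u)\mapsto v$, used implicitly in the proof of Proposition \ref{thm_dil_inv_characterization}.

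First, $\mc G$ is nonempty, since as a linear subspace it contains $\mathbf 0=(0,0)$.

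For inverses, I use the identity $(v,u)^{-1}=(-v,-u)$ noted after the definition of $\mc H$. This is exactly the additive inverse in $\mathbb R^{2D+1}$, so it lies in $\mc G$ because $\mc G$ is a linear subspace.

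Closure under multiplication is the key step. Take $g=(v,u)$ and $\tilde g=(\tilde v,\tilde u)$ in $\mc G$. By definition
\[
g\tilde g=(v+\tilde v,\,u+\tilde u+\tfrac12\omega(v,\tilde v)).
\]
Since $v=\pi(g)$ and $\tilde v=\pi(\tilde g)$ both lie in $\pi(\mc G)$, which is isotropic, we get $\omega(v,\tilde v)=0$. Hence $g\tilde g=(v+\tilde v,u+\tilde u)$, which is the vector sum $g+\tilde g$ and therefore lies in $\mc G$.

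The only point needing care is that $\pi(\mc G)$ is a linear subspace at all, so that the isotropy hypothesis makes sense. This holds because $\pi$ is a linear map and $\mc G$ is a linear subspace. No other obstacle is expected: the whole argument reduces to the fact that the group law coincides with vector addition on pairs whose horizontal parts are $\omega$-orthogonal.
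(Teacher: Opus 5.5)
Your proof is correct: with $\pi(\mc G)$ isotropic, the correction term $\tfrac12\omega(v,\tilde v)$ vanishes, so the group product coincides with vector addition on $\mc G$, and inverses are additive inverses. The paper states this proposition without proof, and your direct verification is exactly the routine check it leaves implicit.
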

    If $\mc G$ is neither horizontal nor vertical, then we call it an \emph{inclined group}. Notice that inclined groups are not homogeneous.
	
	\begin{fact}
		A linear subspace of $\mc H$ is a subgroup if and only if it is either a horizontal, a vertical or an inclined group.
	\end{fact}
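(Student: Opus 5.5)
Identify $\mc H$ with $\mathbb R^{2D+1}$ via $(v,u)$, let $\pi(v,u)=v$, and let $\mc G$ be a linear subspace of $\mathbb R^{2D+1}$. The ``if'' direction is already available. A vertical group $V\times\mathbb R$ is a subgroup because the group operation only perturbs the last coordinate, and that coordinate ranges over all of $\mathbb R$. Horizontal and inclined groups are linear subspaces whose projection $\pi(\mc G)$ is isotropic: this holds by definition for horizontal groups, and for inclined groups by the way they were introduced, via the preceding proposition. So that proposition shows they are subgroups.

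For the ``only if'' direction, assume $\mc G$ is a linear subspace and a subgroup, and set $V=\pi(\mc G)$, which is a linear subspace of $\mathbb R^{2D}$. I split into two cases according to whether $V$ is isotropic.

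\emph{Case 1: $V$ is not isotropic.} Pick $g_1=(v_1,u_1),g_2=(v_2,u_2)\in\mc G$ with $\omega(v_1,v_2)\neq 0$. By Proposition \ref{nonzero_omega_cont_Z}, $\mc G$ contains the nonzero vector $(0,\omega(v_1,v_2))$. Since $\mc G$ is a linear subspace, it then contains its whole real span, which is $\mc Z=\{0\}\times\mathbb R$. It follows that $\mc G=\mc G+\mc Z=V\times\mathbb R$, because for $(v,u)\in\mc G$ we may adjust $u$ arbitrarily within $\mc G$. Hence $\mc G$ is a vertical group; it is normal by Lemma \ref{normality_characterization}.

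\emph{Case 2: $V$ is isotropic.} Then $\mc G$ is a linear subspace with isotropic projection. If $\mc G\subset V\times\{0\}$, it equals $V\times\{0\}$ and is horizontal. If $\mc G\supset\mc Z$, it equals $V\times\mathbb R$ and is vertical (this includes the degenerate case $\mc G=\mc Z$). Otherwise $\mc G$ is neither horizontal nor vertical, so by definition it is inclined.

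The only delicate point is making the definition of ``inclined'' precise: it should mean a linear subspace with isotropic projection that is neither horizontal nor vertical. With that reading, Case 2 is a tautology and the main content is the Case 1 step, where Proposition \ref{nonzero_omega_cont_Z} together with linearity forces $\mc Z\subset\mc G$. One should also note that a vertical group with isotropic $V$ is also covered by Case 2, so the three classes exhaust all possibilities consistently.
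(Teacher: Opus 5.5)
Your proof is correct, and you read ``inclined'' (a linear subspace with isotropic projection that is neither horizontal nor vertical) as the paper intends. The paper states this proposition without proof, but your argument is exactly the case analysis it uses in the proof of Lemma~\ref{For_groups_Span_Lin_is_the_same}: when $\pi(\mc G)$ is not isotropic, Proposition~\ref{nonzero_omega_cont_Z} plus linearity forces $\mc Z\subset\mc G$, and otherwise you rely on the preceding proposition and the definition of inclined groups.
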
	
    For $A\subset \mc H$ by $\textrm{{Span}}(A)$ we denote the smallest such group containing $A$. By $\textrm{{Lin}}(A)$, we denote the smallest linear space containing $A$.
	
	\begin{lemma}
    \label{For_groups_Span_Lin_is_the_same}
		If $\mc G\subset \mc H$ is a subgroup, then $\textrm{\emph{Span}}(\mc G) = \textrm{\emph{Lin}}(\mc G)$
	\end{lemma}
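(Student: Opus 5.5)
It suffices to show that $\textrm{Lin}(\mc G)$ is itself a subgroup of $\mc H$. Indeed, $\textrm{Lin}(\mc G)$ is a linear subspace containing $\mc G$, so once it is a subgroup it is a linear subspace that is also a subgroup containing $\mc G$, and hence $\textrm{Span}(\mc G)\subset \textrm{Lin}(\mc G)$. Conversely, $\textrm{Span}(\mc G)$ is in particular a linear subspace containing $\mc G$, so $\textrm{Lin}(\mc G)\subset \textrm{Span}(\mc G)$. Thus the whole content is the claim that the linear hull of a subgroup is again a subgroup.

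To prove this, let $V=\pi(\mc G)$ denote the projection to $\mathbb R^{2D}$, so that $\pi(\textrm{Lin}(\mc G))=\textrm{Lin}(V)$. I would split into two cases according to whether $\textrm{Lin}(V)$ is isotropic.

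\emph{Case 1: $\textrm{Lin}(V)$ is isotropic.} Since $\omega$ is bilinear, $\omega$ vanishes on $\textrm{Lin}(V)$ exactly when it vanishes on $V$. In this case $\textrm{Lin}(\mc G)$ is a linear subspace whose projection is isotropic, and the earlier proposition (a linear subspace with isotropic projection is a subgroup) applies directly. Concretely, on such a subspace the group law reduces to vector addition.

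\emph{Case 2: $\textrm{Lin}(V)$ is not isotropic.} By bilinearity there exist $g_1=(v_1,u_1)$ and $g_2=(v_2,u_2)$ in $\mc G$ with $\omega(v_1,v_2)\neq 0$. Proposition \ref{nonzero_omega_cont_Z} then gives $(0,\omega(v_1,v_2))\in \mc G$. Consequently $\textrm{Lin}(\mc G)$ contains the line $\mc Z$. Since $\textrm{Lin}(\mc G)\subset \textrm{Lin}(V)\times\mathbb R$ and the two spaces have the same projection, this forces $\textrm{Lin}(\mc G)=\textrm{Lin}(V)\times \mathbb R$. That set is a subgroup: the product of $(v,u)$ and $(\tilde v,\tilde u)$ has first coordinate $v+\tilde v\in \textrm{Lin}(V)$, the second coordinate is arbitrary, and inverses are $(-v,-u)$. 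So $\textrm{Lin}(\mc G)$ is a vertical group.

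The only step needing care is the reduction in Case 2. One has to note that non-isotropy of the span already yields a pair of elements of $\mc G$ itself with $\omega\neq 0$, which follows from bilinearity by expanding in spanning vectors taken from $V$. One then uses that a linear space containing $\mc Z$ together with a projection $W$ equals $W\times\mathbb R$.
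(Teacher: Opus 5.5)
Your proof is correct and is essentially the paper's argument. The paper also splits on whether some pair in $\mc G$ has $\omega(v_1,v_2)\neq 0$, in which case Proposition~\ref{nonzero_omega_cont_Z} forces $\textrm{Lin}(\mc G)=V\times\mathbb R$, and you merge its remaining cases i and iii into one isotropic case via the proposition that a linear subspace with isotropic projection is a subgroup, while also spelling out the two inclusions and the bilinearity step more explicitly than the paper does.
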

	
	\begin{proof}
		Let $V=\pi(\textrm{{Lin}}(\mc G))$ (a linear subspace of $\mathbb R^{2D}$). We have three possibilities:
		\begin{itemize}
			\item[\emph{i.}] $\mc G\subset V\times\{0\}$. Then obviously $\textrm{{Span}}(\mc G) = \textrm{{Lin}}(\mc G)=V\times \{0\}$.
			\item[\emph{ii.}] There exist $(v_1,u_1),(v_2,u_2)\in\mc G$ such that $\omega(v_1,v_2)\neq 0$. Then, by Proposition \ref{nonzero_omega_cont_Z}, $\mc G$ contains $\langle(0,\gamma)\rangle$ for some $\gamma\neq 0$, thus $\textrm{Span}(\mc G)=\textrm{Lin}(\mc G) = V\times \mathbb R$.
			\item[\emph{iii.}] $\mc G\not\subset V\times\{0\} $, but for each $(v_1,u_1),(v_2,u_2)\in\mc G$ we have $\omega(v_1,v_2)= 0$. Then $\textrm{Lin}(\mc G)$ is either an inclined group or $V\times \mathbb R$. 
		\end{itemize}
    \end{proof}

%%%%%%%%%%%%%%%%%%%%%%%%%%%%%%%%%%%%%%%%%%%%%%%%%%%%%%%%%%%%%%%%%%%%%%%%%%%%%%%%%%%%%%%%%%%%%%%%%%%%%%%%%%%%%%%%%%%%%%%%%%%%%%%%%%%%%%%%%%%%%%%%%%%%%%%%%%%%%%%%%%%%
	\section{The Topology of the Heisenberg Group}
		
	We consider the Euclidean topology $\tau_e$ on $\mc H$, that is the natural topology on $\mathbb R^{2D+1}$. By $d_e$ we denote the Euclidean metric on $\mc H$ and we write $|v|$ for the Euclidean norm of $v$. Note that, for every $g\in \mc H$, the self-maps $h\mapsto hg$ and $h\mapsto gh$ are continuous with respect to~$\tau_e$.
    
    For the following definitions of homogeneous metric and homogeneous norm, compare, e.g., \cite{BLU} and \cite{DRES}.
    
	\begin{definition}
		We call a metric $d$ on $\mc H$ \emph{homogeneous} if it satisfies:
		\begin{enumerate}
			\item [\emph{i.}] $d(gf,hf) = d(g,h)$,
			\item [\emph{ii.}] $d(\delta_r g,\delta_r h) = r d(g,h)$.
		\end{enumerate}
	\end{definition}    
	
	We denote the topology induced by a metric $d$ by $\tau_d$.
	\begin{fact}
		Let $d$ be a homogeneous metric. Then $\tau_d=\tau_e$.
	\end{fact}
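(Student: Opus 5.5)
To show $\tau_d=\tau_e$, I will show that a sequence $g_n\to g$ in one topology if and only if it does in the other. Both topologies are metrizable, so this suffices. By property \emph{i.}, $d(g_n,g)=d(g_ng^{-1},\mathbf 0)$. Right translation $h\mapsto hg^{-1}$ is a $\tau_e$-homeomorphism, with inverse $h\mapsto hg$. Hence it is enough to prove, for sequences,
\[
d(h_n,\mathbf 0)\to 0 \iff h_n\to \mathbf 0 \text{ in } \tau_e .
\]
Write $N(h)=d(h,\mathbf 0)$. By \emph{ii.}, $N(\delta_r h)=rN(h)$, and $N(h)=0$ only for $h=\mathbf 0$.

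The key step is to compare $N$ with the Euclidean quantity $\|h\|=|v|+|u|^{1/2}$ for $h=(v,u)$. This quantity is also $\delta_r$-homogeneous of degree one. Let $S=\{h:\|h\|=1\}$, which is a compact set in $\tau_e$ not containing $\mathbf 0$. If I know that $N$ is $\tau_e$-continuous, then $N$ attains a minimum $c>0$ and a maximum $C<\infty$ on $S$. Every $h\neq\mathbf 0$ can be written as $h=\delta_r s$ with $r=\|h\|$ and $s\in S$. Homogeneity then gives $c\|h\|\le N(h)\le C\|h\|$. Since $\|h_n\|\to0$ exactly when $h_n\to\mathbf 0$ euclideanly, this two-sided bound yields the claimed equivalence.

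The main obstacle is the $\tau_e$-continuity of $N$, which is not given a priori. By the triangle inequality and \emph{i.}, $|N(h)-N(k)|\le d(h,k)=N(hk^{-1})$. So it suffices to prove continuity at $\mathbf 0$, i.e. that $h\to\mathbf 0$ euclideanly forces $N(h)\to0$. I would show this by writing $h=(v,u)$ as a product of a bounded number of elementary factors: $h=\prod_i (p_ie_i,0)(q_if_i,0)\cdot(0,u')$, where $u'$ is $u$ corrected by the $\omega$-terms, so that $u'\to0$ as $h\to\mathbf 0$. By the triangle inequality and right invariance, $N(ab)\le N(a)+N(b)$. For each factor, dilation gives $N((te_i,0))=|t|\,N((\pm e_i,0))$ and $N((0,s))=|s|^{1/2}N((0,\pm1))$. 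Only finitely many constants appear, so $N(h)\le C'\big(|v|_1+|u'|^{1/2}\big)\to0$. In fact this argument directly gives the upper bound $N\le C\|\cdot\|$. The lower bound then uses the compactness argument above, where continuity of $N$ on $S$ now follows from the upper bound.
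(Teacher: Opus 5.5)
Your proof is correct and complete. The paper does not prove this proposition itself; it refers to Proposition 2.26 of the Carnot-group reference [DRES]. Your argument is therefore a self-contained substitute rather than a variant of a proof in the text. I checked the one computational step. In the factorization $h=\prod_i (p_ie_i,0)(q_if_i,0)\cdot(0,u')$, the only nonzero $\omega$-terms are $\omega(p_ie_i,q_if_i)=p_iq_i$. Hence $u'=u-\tfrac12\sum_i p_iq_i$, and $|u'|^{1/2}\le |u|^{1/2}+\tfrac12|v|$. Your claimed global bound $d(h,\mathbf 0)\le C(|v|+|u|^{1/2})$ therefore holds. Continuity from $|N(h)-N(k)|\le N(hk^{-1})$ and the compactness argument on $\{|v|+|u|^{1/2}=1\}$ then go through as you describe.

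Compared with the citation, your route derives the continuity of $d(\cdot,\mathbf 0)$ from the triangle inequality instead of assuming it. It also gives the quantitative two-sided estimate $c(|v|+|u|^{1/2})\le d(h,\mathbf 0)\le C(|v|+|u|^{1/2})$. Your argument works verbatim for any function satisfying properties i--iv of a homogeneous norm, and $|v|+|u|^{1/2}$ is comparable to the Cygan--Kor\'anyi norm. So it also proves Lemma \ref{Homog_norm_equiv_to_CK} under the paper's own definitions, where continuity of the norm is not a hypothesis. The citation, for its part, covers a broader class of groups. Your continuity step relies on the genuine triangle inequality, so it would not carry over as written to the quasimetrics mentioned in the paper's remark.
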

    For the proof, see \cite[Proposition 2.26]{DRES}.
    
	\begin{definition}
		We call a function $||\cdot||:\mc H\to [0,\infty)$ a \emph{homogeneous norm} if it satisfies:
		\begin{enumerate}
			\item [\emph{i.}] $||g|| = 0$ if and only if $g=\mathbf 0$,
			\item [\emph{ii.}] $||\delta_r g|| = r ||g||$.
			\item [\emph{iii.}] $||g^{-1}||=||g||$.
			\item [\emph{iv.}] $||gh||\leqslant ||g||+||h||$.
		\end{enumerate}
	\end{definition}
	\begin{fact}
		Let $||\cdot||$ be a homogeneous norm. Then $d(g,h):=||gh^{-1}||$ is a homogeneous metric. Conversely, let $d(\cdot,\cdot)$ be a homogeneous metric. Then $||g||:=d(g,\mathbf 0)$ is a homogeneous norm and $d(g,h) = ||gh^{-1}||$.
	\end{fact}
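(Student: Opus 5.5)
We verify the two directions separately, using only the axioms of a homogeneous norm and a homogeneous metric, together with the identities $(gh)^{-1}=h^{-1}g^{-1}$ and $\delta_r(gh)=\delta_r g\cdot\delta_r h$ recorded earlier.

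\textbf{From a norm to a metric.} Let $||\cdot||$ be a homogeneous norm and put $d(g,h)=||gh^{-1}||$.
\begin{itemize}
\item Positivity is axiom \emph{i}: $d(g,h)=0$ iff $gh^{-1}=\mathbf 0$ iff $g=h$.
\item Symmetry follows from axiom \emph{iii}, since $(gh^{-1})^{-1}=hg^{-1}$, so $d(g,h)=||hg^{-1}||=d(h,g)$.
\item For the triangle inequality we write $gk^{-1}=(gh^{-1})(hk^{-1})$ and apply axiom \emph{iv}, which gives $d(g,k)\leqslant d(g,h)+d(h,k)$.
\item Right invariance comes from $(gf)(hf)^{-1}=gff^{-1}h^{-1}=gh^{-1}$, so $d(gf,hf)=d(g,h)$.
\item For the dilation property, $\delta_r$ is a homomorphism, so $\delta_r(h^{-1})=(\delta_r h)^{-1}$ and hence $\delta_r g(\delta_r h)^{-1}=\delta_r(gh^{-1})$. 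Axiom \emph{ii} then gives $d(\delta_r g,\delta_r h)=r\,d(g,h)$.
\end{itemize}

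\textbf{From a metric to a norm.} Let $d$ be a homogeneous metric and put $||g||=d(g,\mathbf 0)$. The first task is to identify $d$ with $||gh^{-1}||$. By right invariance with $f=h^{-1}$,
\[
d(g,h)=d(gh^{-1},hh^{-1})=d(gh^{-1},\mathbf 0)=||gh^{-1}||.
\]
We then check the norm axioms.
\begin{itemize}
\item Axiom \emph{i} holds because $d$ is a metric.
\item Axiom \emph{ii} holds because $\delta_r\mathbf 0=\mathbf 0$, so $||\delta_r g||=d(\delta_r g,\delta_r\mathbf 0)=r||g||$.
\item Axiom \emph{iii} follows from right invariance with $f=g^{-1}$ and symmetry: $||g||=d(g,\mathbf 0)=d(\mathbf 0,g^{-1})=||g^{-1}||$.
\item For axiom \emph{iv}, the triangle inequality gives $||gh||=d(gh,\mathbf 0)\leqslant d(gh,h)+d(h,\mathbf 0)$. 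Right invariance gives $d(gh,h)=d(g,\mathbf 0)=||g||$, so $||gh||\leqslant||g||+||h||$.
\end{itemize}
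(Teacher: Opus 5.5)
Your proof is correct and complete. It checks every axiom directly, including the points that need care: $\delta_r(h^{-1})=(\delta_r h)^{-1}$ and $\delta_r\mathbf 0=\mathbf 0$ for the dilation properties, and the right-invariance identities $d(g,h)=d(gh^{-1},\mathbf 0)$ and $d(gh,h)=d(g,\mathbf 0)$ for the converse. The paper states this proposition without proof and treats it as a routine check (the definitions are taken from the cited literature); your argument is exactly that check.
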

	\begin{example}
		An important example of a homogeneous norm is the \emph{Cygan-Kor\'anyi norm}: 
		\begin{equation}
			\label{C-K_norm}
			||(v,u)||_{CK} = (|v|^4 + |u|^2)^{\tfrac12}
		\end{equation}
		It is also used in a more general form, namely $(|v|^4 + c|u|^2)^{\tfrac12}$, where $c>0$. We will always refer to the form given by \ref{C-K_norm}. By $d_{CK}$ we denote the metric induced by $||\cdot||_{CK}$.
	\end{example}
	\begin{lemma}
    \label{Homog_norm_equiv_to_CK}
		Every homogeneous norm $||\cdot||$ is equivalent to $||\cdot||_{CK}$, i.e., there exists a positive number $c$ such that $\frac1c||\cdot||_{CK}\leqslant ||\cdot||\leqslant c||\cdot||_{CK}$.
	\end{lemma}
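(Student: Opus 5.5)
The plan is the standard compactness argument on a ``unit sphere'' adapted to dilations. Let $S=\{g\in\mc H: ||g||_{CK}=1\}$. By the explicit formula \eqref{C-K_norm}, $S$ is closed and bounded in $\mathbb R^{2D+1}$: from $|v|^4+|u|^2=1$ we get $|v|\leqslant 1$ and $|u|\leqslant 1$. Hence $S$ is compact in $\tau_e$, and it does not contain $\mathbf 0$. Every $g\neq\mathbf 0$ can be written uniquely as $g=\delta_r s$ with $r=||g||_{CK}>0$ and $s=\delta_{1/r}g\in S$, because $||\cdot||_{CK}$ is itself homogeneous.

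The key step is to show that an arbitrary homogeneous norm $||\cdot||$ is continuous with respect to $\tau_e$. Once this is known, $||\cdot||$ restricted to the compact set $S$ attains a minimum $m$ and a maximum $M$. Property \emph{i.} of a homogeneous norm gives $m>0$, since $\mathbf 0\notin S$. For $g=\delta_r s$, property \emph{ii.} then yields
\[
  m\,||g||_{CK}=m r\leqslant r||s||=||g||\leqslant M r = M||g||_{CK}.
\]
This gives the claim with $c=\max(M,1/m)$, and the case $g=\mathbf 0$ is trivial.

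For continuity, I would use the preceding two propositions. The function $d(g,h)=||gh^{-1}||$ is a homogeneous metric, so $\tau_d=\tau_e$ by the cited result from \cite{DRES}. Then $g\mapsto ||g||=d(g,\mathbf 0)$ is $1$-Lipschitz with respect to $d$, so it is $\tau_d$-continuous and therefore $\tau_e$-continuous.

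The main obstacle is this continuity step, since it rests on $\tau_d=\tau_e$, which is quoted rather than proved. If I wanted to avoid that citation, I would prove continuity at $\mathbf 0$ directly. Write $g=(v,u)$ as a product of the elements $(p_ie_i,0)$, $(q_if_i,0)$ and $(0,u)$, with a correction in the vertical coordinate. By subadditivity \emph{iv.} and homogeneity \emph{ii.}, $||g||$ is bounded by a combination of terms $|p_i|\,||x_i||$, $|q_i|\,||y_i||$ and $|u'|^{1/2}||z||$, where $u'$ differs from $u$ by a quadratic expression in the $p_i,q_i$. This tends to $0$ as $g\to\mathbf 0$. Continuity everywhere then follows from the reverse triangle inequality $\bigl|\,||g||-||h||\,\bigr|\leqslant ||gh^{-1}||$, which comes from \emph{iii.} and \emph{iv.}, combined with the $\tau_e$-continuity of $(g,h)\mapsto gh^{-1}$.
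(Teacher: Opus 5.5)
Your proof is correct and is the standard unit-sphere compactness argument behind \cite[Proposition 5.1.4]{BLU}, which the paper cites instead of proving. Your one addition is continuity of $||\cdot||$: the cited setting assumes it (cf.\ the paper's Remark that continuity is what matters), but it is not among i.--iv., and your direct derivation supplies it without relying on the quoted $\tau_d=\tau_e$ (continuity at $\mathbf 0$ via the horizontal factorization with a central correction, then $\bigl|\,||g||-||h||\,\bigr|\leqslant ||gh^{-1}||$).

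The degree-one homogeneity of $||\cdot||_{CK}$ that you use requires the exponent $\tfrac14$ in \eqref{C-K_norm}, as in the paper's later computations; the displayed $\tfrac12$ is a misprint.
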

    For the proof, see e.g. \cite[Proposition 5.1.4]{BLU}.

    \begin{remark}
        \,
        \begin{enumerate}
            \item In the context of more general Carnot groups, as it is in \cite{DRES}, a notion of a \emph{quasimetric} (or a \emph{quasidistance}) is considered. The difference between a metric and a quasimetric is that a quasimetric requires only a weak triangle inequality: $d(g_1,g_2)\leqslant C(d(g_1,g_3) + d(g_3, g_2))$ for some uniform constant $C>0$. A quasimetric still induces the Euclidean topology, however, it may not be continuous as a function on $\mc H\times \mc H$. All results in this article may be written for continuous quasimetrics---we restricted to metrics to simplify the presentation.

            \item In general, for a homogeneous norm, the triangle inequality is not required---it is sufficient that the norm is continuous. In particular, Lemma \ref{Homog_norm_equiv_to_CK} holds in this case (even without the symmetry assumption). However, such norms induce only continuous quasimetrics, hence we assumed the triangle inequality.

            \item In literature, it is often required that a homogeneous metric is left invariant. We assume right invariance due to our dynamical approach---we will commonly use right translates $Ag$ of the subsets of $\mc H$.
        \end{enumerate}
    \end{remark}
    
	From now on we fix a homogeneous metric $d$ on $\mc H$, and we let $||\cdot||$ denote the homogeneous norm related to $d$.
    \begin{lemma}
    \label{distance_between_G_and_Span(G)}
        Let $\mc G$ be a subgroup of $\mc H$. There is $c>0$ such that for each $g\in\textrm{\emph{Span}}(\mc G)$ we have $d(g,\mc G)\leqslant c$.
    \end{lemma}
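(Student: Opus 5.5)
The plan is to use Lemma \ref{For_groups_Span_Lin_is_the_same}, which gives $\textrm{Span}(\mc G)=\textrm{Lin}(\mc G)$, and then show that $\mc G$ is "cocompact" in this linear subgroup. Concretely, for every $g\in \textrm{Lin}(\mc G)$ I will produce $h\in\mc G$ such that $gh^{-1}$ lies in a fixed compact set $K\subset\mc H$ not depending on $g$. Since $d(g,h)=||gh^{-1}||$, Lemma \ref{Homog_norm_equiv_to_CK} then bounds $d(g,\mc G)$ by $c\cdot\sup_{k\in K}||k||_{CK}<\infty$. As in the proof of Lemma \ref{For_groups_Span_Lin_is_the_same}, let $V=\pi(\textrm{Lin}(\mc G))$ and split into two cases, according to whether $\omega$ vanishes on pairs of elements of $\mc G$.

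\emph{Case 1: $\omega(v_1,v_2)=0$ for all $(v_1,u_1),(v_2,u_2)\in\mc G$.} This covers cases \emph{i} and \emph{iii} of Lemma \ref{For_groups_Span_Lin_is_the_same}.
\begin{enumerate}
\item By bilinearity, $\omega$ also vanishes on $\pi(\textrm{Lin}(\mc G))$. Hence on the linear space $L=\textrm{Lin}(\mc G)$ the group law is ordinary vector addition, and $\mc G$ is an additive subgroup of $L$ whose linear span is $L$.
\item Choose $g_1,\dots,g_k\in\mc G$ forming a linear basis of $L$.
\item For $g=\sum t_ig_i\in L$, let $n_i=\lfloor t_i\rfloor$ and set $h=\sum n_ig_i=\prod g_i^{n_i}\in\mc G$. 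This uses \eqref{eq:additive} and commutativity inside $L$.
\item Then $gh^{-1}=\sum (t_i-n_i)g_i$ lies in the compact parallelepiped $K=\{\sum s_ig_i: s_i\in[0,1]\}$, as required.
\end{enumerate}

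\emph{Case 2: there are $g_1,g_2\in\mc G$ with $\omega(v_1,v_2)=\gamma\neq0$.} This is case \emph{ii}, where $\textrm{Lin}(\mc G)=V\times\mathbb R$.
\begin{enumerate}
\item By Proposition \ref{nonzero_omega_cont_Z}, $(0,\gamma)\in\mc G$; we may take $\gamma>0$.
\item Choose $h_1,\dots,h_k\in\mc G$ with $\pi(h_i)=w_i$ forming a basis of $V$. This is possible since $\pi(\mc G)$ spans $V$.
\item For $g=(v,u)$, write $v=\sum t_iw_i$ and let $h=h_1^{n_1}\cdots h_k^{n_k}\in\mc G$ with $n_i=\lfloor t_i\rfloor$. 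Then $\pi(h)=\sum n_iw_i$, so $gh^{-1}=(w,u')$ with $w=\sum(t_i-n_i)w_i$ in a bounded set. The vertical coordinate $u'$ is uncontrolled at this point.
\item Choose $m\in\mathbb Z$ with $u'-m\gamma\in[0,\gamma)$, and put $h'=(0,m\gamma)h\in\mc G$.
\item Since $(0,m\gamma)$ is central, $gh'^{-1}=(w,u'-m\gamma)$. This lies in the compact set $K=\{(\sum s_iw_i,s): s_i\in[0,1],\ s\in[0,\gamma]\}$.
\end{enumerate}

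The only delicate point is the noncommutative bookkeeping in Case 2. The product $h$ carries an uncontrolled central term, and $gh^{-1}$ may pick up further $\frac12\omega$ corrections. This is harmless because only the horizontal part of $gh^{-1}$ is controlled by the choice of the $n_i$, and the central element absorbs the vertical part exactly. In Case 1 there is no such issue, since the group law on $L$ is genuinely additive.
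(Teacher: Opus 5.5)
Your proof is correct and follows essentially the same route as the paper: choose a basis of $\textrm{Lin}(\mc G)$ (or of its horizontal projection $V$) from elements of $\mc G$, round the coefficients to integers, and in the non-isotropic case use the central element $(0,\gamma)\in\mc G$ from Proposition \ref{nonzero_omega_cont_Z} to absorb the vertical discrepancy. Your version is slightly cleaner: you choose $m$ only after computing the full vertical coordinate of $gh^{-1}$, which accounts for every cross term, whereas the paper's displayed $d_{CK}$ estimate in the non-isotropic case drops the term $-\frac12\omega(v,\tilde v)$.
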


    \begin{proof}
        We will show this for $d = d_{CK}$. The general case follows from Lemma \ref{Homog_norm_equiv_to_CK}.
    
		Let $\{(v_1,u_1),(v_2,u_2),\ldots,(v_m,u_m)\}\subset \mc G$ be a basis of $\textrm{Lin}(\mc G) = \textrm{Span}(\mc G)$ (see Lemma \ref{For_groups_Span_Lin_is_the_same}). Fix $(v,u)\in \textrm{Span}(\mc G)$. There is a linear combination $(\tilde v, \tilde u) = \sum_{i=1}^m k_i (v_i,u_i)$ with integer coefficients, such that 
        \begin{equation*}
            |v-\tilde v|\leqslant \frac{m}{2}\max\{|v_i|\}=:c_1\text{ and }|u-\tilde u|\leqslant \frac{m}{2}\max\{|u_i|\}=:c_2.
        \end{equation*}
        Denote $g_i=(v_i,u_i)$.        

        Now we consider two cases. If the linear space $V=\pi(\textrm{Span}(\mc G))$ is isotropic, i.e., $\omega(v_i,v_j) = 0$ for each $i,j$, then $(\tilde v, \tilde u)={g_1}^{k_1}\dots{g_n}^{k_n}\in\mc G$ by \eqref{eq:additive}. Moreover,
        \begin{equation*}
            d_{CK}((v,u),(\tilde v, \tilde u)) = (|v-\tilde v|^{4} + |u-\tilde u|^2)^{\frac14}\leqslant (c_1^{4} + c_2^2)^{\frac14} =:c.
        \end{equation*}
        
        %If $V$ is not isotropic, then instead of $(\tilde v, \tilde u)$, we have to take $(\tilde v, \hat u) =\prod_{i=1}^m(v_i,u_i)^{k_i}\in\mc G$, where $\hat u = \sum_{i=1}^m k_iu_i + \frac12\sum_{i<j}k_ik_j\omega(v_i, v_j)$. However, by Proposition \ref{nonzero_omega_cont_Z}, there is an element $(0,\gamma)\in \mc G$, where $\gamma \neq 0$. Take $l\in\mathbb Z$ such that $|u-\hat u -\frac12 \omega(v,\tilde v) - l\gamma|\leqslant \frac{|\gamma|}{2} =: c_3$. We have $(\tilde v, \hat u+ l\gamma) = (\tilde v, \hat u)(0,\gamma)^l\in\mc G$ and 
        %\begin{equation*}
        %    d_{CK}((v,u),(\tilde v, \hat u+ l\gamma)) = (|v-\tilde v|^{4} + |u-\tilde u-\frac12 \omega(v,\tilde v) - l\gamma|^2)^{\frac14}\leqslant (c_1^{4} + c_3^2)^{\frac14} =:c.
        %\end{equation*}
        If $V$ is not isotropic, then $g={g_1}^{k_1}\dots{g_n}^{k_n} = (\tilde v, \tilde u + w)$, where $w = \frac12\sum_{i<j}k_ik_j\omega(v_i, v_j)$. By Proposition \ref{nonzero_omega_cont_Z}, $\mc G$ contains an element $h=(0,\gamma)$, where $\gamma > 0$. Take $l\in\mathbb Z$ such that $|w+l\gamma|\leqslant |\gamma|$. Then,
        \begin{equation*}
            d_{CK}((v,u),gh^l) = (|v-\tilde v|^{4} + |u-\tilde u -w -l \gamma|^2)^{\frac14} \leqslant (c_1^{4} + (c_2+|\gamma|)^2)^{\frac14} =:c.
        \end{equation*}
    \end{proof}
    
    \begin{corollary}
    \label{d(g,H)<=lambda}
        There exists $\lambda >0$ with the property that for any $g\in\mc H$ there is $\tilde g\in H$ such that $d(g,\tilde g)\leqslant \lambda$.
    \end{corollary}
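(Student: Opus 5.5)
This follows directly from Lemma \ref{distance_between_G_and_Span(G)} applied to $\mc G = H$. The only thing to check is that $\textrm{Span}(H) = \mc H$.

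By Lemma \ref{For_groups_Span_Lin_is_the_same}, $\textrm{Span}(H) = \textrm{Lin}(H)$. Moreover, $\textrm{Lin}(H)$ contains $x_i = (e_i,0)$ and $y_i = (f_i,0)$ for $i=1,\ldots,D$, and $z=(0,1)$; indeed $z = x_1y_1x_1^{-1}y_1^{-1}\in H$. These $2D+1$ vectors form a basis of $\mathbb R^{2D+1}$, so $\textrm{Lin}(H) = \mc H$. The same conclusion also follows from case \emph{ii.} in the proof of Lemma \ref{For_groups_Span_Lin_is_the_same}, since $\omega(e_1,f_1) = 1 \neq 0$, which forces $\textrm{Span}(H) = \mathbb R^{2D}\times\mathbb R$.

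Lemma \ref{distance_between_G_and_Span(G)} therefore gives a constant $c>0$ with $d(g,H)\leqslant c$ for every $g\in\mc H$. We take $\lambda := c+1$; any strictly larger constant works, and the $+1$ is there because the lemma only bounds the infimum. By definition of the infimum, for each $g$ there is $\tilde g\in H$ with $d(g,\tilde g) < c+1 = \lambda$.

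Alternatively, one can avoid the infimum issue altogether. The proof of Lemma \ref{distance_between_G_and_Span(G)} produces an explicit element $gh^l\in\mc G$ realising the bound $c$. Moreover, $H$ is discrete: by \eqref{form_of_discrete_H} it is a closed set with locally finite intersection with balls, since homogeneous metric balls are Euclidean-bounded by Lemma \ref{Homog_norm_equiv_to_CK}. Hence the infimum is attained, and we can take $\lambda = c$.

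I expect no real obstacle here. The only mild points are checking that the lemma's hypotheses apply to $H$ and the attainment of the infimum, both of which are routine.
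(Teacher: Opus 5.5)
Your proposal is correct and matches the paper, which states this as an immediate consequence of Lemma \ref{distance_between_G_and_Span(G)} applied to $\mc G = H$, using $\textrm{Span}(H)=\mc H$ because $H$ contains $x_i$, $y_i$ and $z$. Your extra step of passing from the bound on $d(g,H)$ to an actual $\tilde g\in H$ fills a small gap the paper leaves implicit. Either of your routes works: taking $\lambda=c+1$, or noting that the infimum is attained via Proposition \ref{Bounded_are_finite}.
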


    We write $B(r)$ for a closed ball $\{g \in \mc H:d(\mathbf{0}, g)\leqslant r\}$.
    We say that a set $A\subset \mc H$ is bounded if it is contained in $B(r)$ for some $r$.
	    
	\begin{fact}
		\label{Bounded_are_finite}
		Any bounded set $A\subset \mc H$ satisfies $|A\cap H|<\infty$.
	\end{fact}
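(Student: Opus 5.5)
The plan is to reduce the statement to the elementary fact that a bounded subset of Euclidean space contains only finitely many points of a lattice-like discrete set. By Lemma \ref{Homog_norm_equiv_to_CK}, there is $c>0$ with $||g||_{CK}\leqslant c||g||$ for all $g$. Hence if $A\subset B(r)$ for the fixed homogeneous metric $d$, then every $(v,u)\in A$ satisfies $(|v|^4+|u|^2)^{1/2}\leqslant cr$. Consequently $|v|\leqslant (cr)^{1/2}$ and $|u|\leqslant cr$. So $A$ is contained in the Euclidean box
$$K=\{(v,u): |v|\leqslant (cr)^{1/2},\ |u|\leqslant cr\},$$
and it suffices to show that $|K\cap H|<\infty$.

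Next I use the explicit description \eqref{form_of_discrete_H} of $H$. Every $(v,u)\in H$ has $v\in\mathbb Z^{2D}$ and $u\in \frac12\mathbb Z$, since $u$ is an integer plus an integer multiple of $\frac12$. Thus $H\subset \mathbb Z^{2D}\times\frac12\mathbb Z$. This set meets $K$ in finitely many points: there are at most $(2\lfloor (cr)^{1/2}\rfloor+1)^{2D}$ choices of $v$ and at most $4cr+1$ choices of $u$. Therefore $|A\cap H|\leqslant |K\cap H|<\infty$.

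I do not expect a real obstacle here. The only point needing care is passing from the abstract homogeneous metric $d$ to concrete coordinate bounds, and Lemma \ref{Homog_norm_equiv_to_CK} together with the explicit formula \eqref{C-K_norm} for the Cygan--Kor\'anyi norm handles this. Alternatively, one could note that $\tau_d=\tau_e$ and argue that $H$ is discrete and closed. However, bounded in $d$ does not immediately mean relatively compact without the norm comparison, so the comparison with $||\cdot||_{CK}$ is the cleanest route.
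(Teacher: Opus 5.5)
Your proof is correct and follows essentially the same route as the paper: you reduce to $d_{CK}$ via Lemma \ref{Homog_norm_equiv_to_CK} and then use that $H$ lies in the discrete set $\mathbb Z^{2D}\times\frac12\mathbb Z$. The only difference is presentation: the paper argues that an infinite subset of $H$ contains a sequence with unbounded coordinates, hence unbounded $||\cdot||_{CK}$, while you count the lattice points in a box explicitly. One small caveat is that the displayed formula \eqref{C-K_norm} should carry the exponent $\tfrac14$, as homogeneity requires and as the paper itself uses in later computations, so your bounds become $|v|\leqslant cr$ and $|u|\leqslant (cr)^2$, which changes nothing in the argument.
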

    \begin{proof}
        Similarly as for Lemma \ref{distance_between_G_and_Span(G)}, it suffices to show the statement for the case $d=d_{CK}$. If a subset of $H$ is infinite, then it must contain a sequence $(v_n,u_n)$ such that at least one of the norms of its coordinates $|v_n|$, $|u_n|$ tends to infinity with $n$.  Therefore, $||(v_n,u_n)||_{CK}$ tends to infinity.
    \end{proof}
    
	For $A\subset \mc H$ and $t\geqslant 0$ by $A^t$ we denote the thickening of $A$ by $t$, i.e., $A^t=\{g\in \mc H:  d(g,A)\leqslant t\}$, where $ d(g,A)=\inf\{ d(g,h):h\in A\}$. The following easy observation is left without a proof.
	\begin{fact}
		\label{Thickening_the_thickening}
		We have $(A^t)^s\subset A^{t+s}$.% \M{Pomyśleć czy nie ma równości.}
	\end{fact}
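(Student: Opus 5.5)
The inclusion $(A^t)^s\subset A^{t+s}$ follows directly from the triangle inequality for the fixed homogeneous metric $d$, applied through the distance function $d(\cdot,A)$.

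The plan is to first establish the standard $1$-Lipschitz estimate
\[
d(g,A)\leqslant d(g,h)+d(h,A)\qquad\text{for all } g,h\in\mc H.
\]
To see this, fix $g,h$. For any $a\in A$ the triangle inequality gives $d(g,A)\leqslant d(g,a)\leqslant d(g,h)+d(h,a)$. Taking the infimum over $a\in A$ yields the claim. Nothing specific to the Heisenberg structure is needed here, only that $d$ is a metric.

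Next take $g\in (A^t)^s$, so $d(g,A^t)\leqslant s$. For an arbitrary $\varepsilon>0$, choose $h\in A^t$ with $d(g,h)\leqslant s+\varepsilon$. The infimum need not be attained, which is why the $\varepsilon$ is introduced. Since $h\in A^t$, we have $d(h,A)\leqslant t$, and the estimate above gives $d(g,A)\leqslant s+\varepsilon+t$. Letting $\varepsilon\to 0$ gives $d(g,A)\leqslant t+s$, that is, $g\in A^{t+s}$.

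The only point requiring care is the non-attainment of infima. It is handled by the $\varepsilon$-argument, and no compactness or closedness of $A$ is needed. Equality does not hold in general for an arbitrary metric, but for a homogeneous metric on $\mc H$ we do not need it.
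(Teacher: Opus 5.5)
Your proof is correct. The paper leaves this as an easy observation without proof, and your argument is the standard one it presumably has in mind: the estimate $d(g,A)\leqslant d(g,h)+d(h,A)$, together with an $\varepsilon$ to handle infima that are not attained. Note also that $A^t\neq\emptyset$ whenever $(A^t)^s$ is nonempty, so your choice of $h\in A^t$ is always possible.
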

    
    Throughout this article, we denote by $\pi$  the natural projection onto first $2D$ coordinates, that is, $\pi: \mc H\to \mathbb R^{2D}$ such that $\pi((v,u)) = v$. 
    
	\begin{lemma}
		\label{Thickening_vertical_group}
		Let $\mc G$ be a vertical group. Then $d_{CK}(g,\mc G) = d_e(g,\mc G)$.
	\end{lemma}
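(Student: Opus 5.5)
The plan is to compute both distances explicitly and show that each equals the Euclidean distance in $\mathbb R^{2D}$ from $\pi(g)$ to the linear subspace $V$, where $\mc G = V\times\mathbb R$. The key point is that a vertical group contains the whole fibre $\{w\}\times\mathbb R$ over each $w\in V$. So in both metrics the vertical discrepancy can be cancelled completely by choosing the last coordinate of the competitor appropriately.

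First I will handle $d_{CK}$. Write $g=(v,u)$ and let $h=(w,s)\in\mc G$, so $w\in V$ and $s\in\mathbb R$ is arbitrary. Using $d_{CK}(g,h)=||gh^{-1}||_{CK}$, $h^{-1}=(-w,-s)$ and the group law, I get
\[
gh^{-1} = \bigl(v-w,\; u-s-\tfrac12\omega(v,w)\bigr),
\]
and hence
\[
d_{CK}(g,h)=\bigl(|v-w|^4+|u-s-\tfrac12\omega(v,w)|^2\bigr)^{1/4}\geqslant |v-w|.
\]
For fixed $w\in V$, equality holds for the choice $s=u-\tfrac12\omega(v,w)$, and this $h=(w,s)$ lies in $\mc G$ because $\mc G$ contains all of $\{w\}\times\mathbb R$. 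Taking the infimum over $w\in V$ then gives $d_{CK}(g,\mc G)=\inf_{w\in V}|v-w|$.

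Next, for the Euclidean metric,
\[
d_e(g,h)=\bigl(|v-w|^2+|u-s|^2\bigr)^{1/2}\geqslant |v-w|,
\]
with equality for $s=u$. Hence $d_e(g,\mc G)=\inf_{w\in V}|v-w|$ as well. This infimum is attained at the orthogonal projection of $v$ onto $V$, although attainment is not needed. Comparing the two expressions proves the lemma.

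There is no real obstacle here. The only step requiring care is computing $gh^{-1}$ with the correct sign of the $\omega$-term, and that sign is irrelevant anyway because $s$ ranges over all of $\mathbb R$. The crucial structural input is verticality itself: for a horizontal or inclined group the fibre over $w$ is a single point, and the argument fails.
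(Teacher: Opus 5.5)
Your proof is correct and follows essentially the same route as the paper. Both arguments bound $d_{CK}(g,h)=\bigl(|v-w|^4+|u-s-\tfrac12\omega(v,w)|^2\bigr)^{1/4}$ below by $|v-w|$, then use that the vertical group $\mc G$ contains the whole fibre $\{w\}\times\mathbb R$ to make the vertical term vanish, so that both distances equal the Euclidean distance from $\pi(g)$ to $V$.
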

	\begin{proof}
		Notice that since $\mc Z\subset \mc G$, for $g=(v_g,u_g)$, the distance $d_e(g,\mc G)$ equals $d_e((v_g,0), \pi(\mc G)\times \{0\})$. On the other hand, for $h = (v_h,u_h)$, we have $d_{CK}(g,h) = (|v_g-v_h|^4+(u_g-u_h-\tfrac12\omega(v_g,v_h))^2)^{\tfrac14}\geqslant|v_g-v_h|$. But for any $v_h\in \pi(\mc G)$, an element $h= (v_h,u_h)$ with $u_h = u_g-\tfrac12\omega(v_g,v_h)$ belongs to $\mc G$. Therefore, $d_{CK}(g,\mc G) = \inf\{|v_g-v_h|: v_h\in\pi(\mc G)\} = d_e((v_g,0), \pi(\mc G)\times \{0\}) = d_e(g,\mc G)$.
	\end{proof}

%%%%%%%%%%%%%%%%%%%%%%%%%%%%%%%%%%%%%%%%%%%%%%%%%%%%%%%%%%%%%%%%%%%%%%%%%%%%%%%%%%%%%%%%%%%%%%%%%%%%%%%%%%%%%%%%%%%%%%%%%%%%%%%%%%%%%%%%%%%%%%%%%%%%%%%%%%%%%%%%%%%%
	\section{Expansive Sets and Coding}
	Let us recall, that from now on we assume that the discrete group $H$ acts via homeomorphisms on a compact metric space $(\mathbbm X, \rho)$. We denote the action of an element $g\in H$ on $\x\in\mathbbm X$ by the multiplicative notation $g\x$. For $A\subset \mc H$ and $\mathbbm x,\mathbbm y\in \mathbbm X$, we put $\rho^{A}(\mathbbm x,\mathbbm y) = \sup\{\rho(g\x,g\y):g\in A\cap H\}$. We say that this action is \emph{expansive} if there exists $\eta>0$ such that, for all $\x,\y\in\mathbbm X$, $\rho^{\mc H}(\x,\y) = \rho^{H}(\x,\y)\leqslant\eta$ implies $\x=\y$. We call $\eta$ {an} \emph{expansive constant}. Throughout the rest of the paper, the expansive constant $\eta$ remains fixed.
	
	\begin{definition}
		\label{def_expansive}
		We say that a set $A \subset \mc H$ is \emph{expansive} if there exist $\varepsilon>0$ and $t > 0$ such that $\rho^{A^t}(\x,\y)\leqslant \varepsilon$ implies $\x=\y$. Otherwise, we say that $A$ is \emph{nonexpansive}.
	\end{definition}
	
	\begin{remark}
		Notice that every superset of an expansive set is also expansive; and conversely, every subset of a nonexpansive set is also nonexpansive.    
	\end{remark}
	
	\begin{fact}
		\label{Expan_in_d_iff_expan_in_C-K}
		Since $||\cdot|| = d(\cdot,\mathbf 0)$ is equivalent to $||\cdot||_{CK}$, $A$ is expansive with respect to $d$ if and only if it is expansive with respect to $d_{CK}$. 
	\end{fact}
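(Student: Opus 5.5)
The idea is to compare the thickenings of $A$ taken with respect to $d$ and with respect to $d_{CK}$, and to show that each is contained in a thickening of the other. Write $A^t_d$ and $A^t_{CK}$ for the thickening of $A$ by $t$ with respect to $d$ and $d_{CK}$, respectively. By Lemma \ref{Homog_norm_equiv_to_CK}, there is $c\geqslant 1$ with $\frac1c||g||_{CK}\leqslant ||g||\leqslant c||g||_{CK}$ for all $g\in\mc H$.

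First I check that this norm comparison passes to the metrics. Both $d$ and $d_{CK}$ are homogeneous, so $d(g,h)=||gh^{-1}||$ and $d_{CK}(g,h)=||gh^{-1}||_{CK}$, and the previous proposition on homogeneous norms and metrics gives this representation for both. Hence $\frac1c d_{CK}(g,h)\leqslant d(g,h)\leqslant c\,d_{CK}(g,h)$ for all $g,h$. Taking infima over $h\in A$ yields $\frac1c d_{CK}(g,A)\leqslant d(g,A)\leqslant c\,d_{CK}(g,A)$. This gives the inclusions
\[
A^t_d\subset A^{ct}_{CK}\qquad\text{and}\qquad A^t_{CK}\subset A^{ct}_d \quad\text{for every } t\geqslant 0 .
\]

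Now suppose $A$ is expansive with respect to $d$, witnessed by $\varepsilon$ and $t$. I set $t'=ct$. Since $A^{t}_d\subset A^{t'}_{CK}$, the supremum defining $\rho^{A^{t'}_{CK}}$ runs over a larger set, so $\rho^{A^{t}_d}(\x,\y)\leqslant\rho^{A^{t'}_{CK}}(\x,\y)$. Hence $\rho^{A^{t'}_{CK}}(\x,\y)\leqslant\varepsilon$ implies $\rho^{A^t_d}(\x,\y)\leqslant\varepsilon$, which forces $\x=\y$. So $A$ is expansive with respect to $d_{CK}$ with the same $\varepsilon$ and the constant $t'$. The converse follows by the symmetric argument using $A^t_{CK}\subset A^{ct}_d$.

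There is no real obstacle here. The only step that needs care is the first one, turning the norm equivalence into a metric equivalence, and that relies on both metrics being right-invariant and induced by their norms in the same way, $d(g,h)=||gh^{-1}||$.
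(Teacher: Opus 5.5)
Your proposal is correct and takes the same approach as the paper: the statement's own justification is the norm equivalence of Lemma \ref{Homog_norm_equiv_to_CK}, and you make it explicit. You pass it to the metrics via $d(g,h)=||gh^{-1}||$ and $d_{CK}(g,h)=||gh^{-1}||_{CK}$, which gives the nested thickenings $A^t_d\subset A^{ct}_{CK}$ and $A^t_{CK}\subset A^{ct}_d$, and expansiveness then transfers with the same $\varepsilon$ and thickening radius $ct$.
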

	
	\begin{lemma}
		\label{SCA}
		Let $\{A_\alpha\}_{\alpha>0}$ be a family of bounded subsets of $\mc H$ such that $A_\alpha\subset A_\beta$ for $\alpha < \beta$, and their union $A:=\bigcup_{\alpha>0}A_\alpha$ is such that $\rho^A(\mathbbm x,\mathbbm y)\leqslant \eta$ implies $\mathbbm x=\mathbbm y$. Then for each $\varepsilon>0$ there is $\alpha>0$ such that $\rho^{A_\alpha}(\mathbbm x,\mathbbm y)\leqslant\eta$ implies $\rho(\mathbbm x,\mathbbm y)\leqslant \varepsilon$.
	\end{lemma}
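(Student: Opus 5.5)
We argue by contradiction, using compactness of $\mathbbm X\times\mathbbm X$ and continuity of the action. Fix $\varepsilon>0$ and suppose that for every $\alpha>0$ there are $\x_\alpha,\y_\alpha\in\mathbbm X$ with $\rho^{A_\alpha}(\x_\alpha,\y_\alpha)\leqslant\eta$ but $\rho(\x_\alpha,\y_\alpha)>\varepsilon$. Take the sequence $\alpha=n\in\mathbb N$. The pairs $(\x_n,\y_n)$ lie in the compact metric space $\mathbbm X\times\mathbbm X$, so we may pass to a subsequence converging to some $(\x,\y)$. By continuity of $\rho$ we get $\rho(\x,\y)\geqslant\varepsilon>0$, hence $\x\neq\y$.

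Next we show that $\rho^A(\x,\y)\leqslant\eta$, which contradicts the hypothesis on $A$. Fix $g\in A\cap H$. Then $g\in A_\beta$ for some $\beta$, and by monotonicity $g\in A_n$ for all $n\geqslant\beta$. For such $n$ we have $\rho(g\x_n,g\y_n)\leqslant\rho^{A_n}(\x_n,\y_n)\leqslant\eta$. Since $g$ acts by a homeomorphism, $g\x_n\to g\x$ and $g\y_n\to g\y$ along the subsequence. Passing to the limit gives $\rho(g\x,g\y)\leqslant\eta$. Taking the supremum over $g\in A\cap H$ yields $\rho^A(\x,\y)\leqslant\eta$, so $\x=\y$, a contradiction. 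Hence for each $\varepsilon$ some $n$ works, and we take $\alpha=n$.

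There is no serious obstacle. The only points to check are that the non-strict inequality $\leqslant\eta$ survives the limit, which holds because each $g$ is treated individually before taking the supremum, and that the monotonicity assumption places every element of $A\cap H$ in $A_n$ for all large $n$. Boundedness of the $A_\alpha$ is not needed in this argument; it becomes relevant later, since by Proposition \ref{Bounded_are_finite} each $A_\alpha\cap H$ is finite, which makes the conclusion a uniform-continuity-type statement.
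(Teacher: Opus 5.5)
Your proof is correct and follows the same contradiction-plus-compactness argument as the paper. The only difference is in the limit step. The paper fixes $m$ and uses continuity of $\rho^{A_m}$, which it justifies by finiteness of $A_m\cap H$ (Proposition~\ref{Bounded_are_finite}). You instead pass to the limit one element $g\in A\cap H$ at a time, which needs only continuity of a single homeomorphism. As you correctly observe, this shows that the boundedness hypothesis is not actually needed for this lemma.
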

	\begin{proof}
		Assume that there is $\varepsilon>0$ such that for every $\alpha$ there are $\mathbbm x_\alpha,\mathbbm y_\alpha$ such that $\rho^{A_\alpha}(\mathbbm x_\alpha,\mathbbm y_\alpha)\leqslant \eta$ and $\rho(\mathbbm x_\alpha,\mathbbm y_\alpha)>\varepsilon$. Since for $\alpha<\beta$ we have $A_\alpha\subset A_\beta$, we also have $\rho^{A_\alpha}(\mathbbm x,\mathbbm y)\leqslant \rho^{A_\beta}(\mathbbm x,\mathbbm y)$. In particular, whenever $\alpha<\beta$, we get $\rho^{A_\alpha}(\mathbbm x_\beta,\mathbbm y_\beta)\leqslant \rho^{A_\beta}(\mathbbm x_\beta,\mathbbm y_\beta)\leqslant \eta$.
		
		Since $\mathbbm X$ is compact, by considering only integer alpha and choosing appropriate subsequences, we get $\mathbbm x_{n_k}, \mathbbm y_{n_k}$ convergent to some $\mathbbm x, \mathbbm y$, respectively. By the continuity of the metric, we have $\rho(\mathbbm x, \mathbbm y)\geqslant\varepsilon$. Fix a natural number $m$. The function $\rho^{A_m}(\cdot,\cdot)$ is continuous because $|A_m\cap H|<\infty$ by Proposition \ref{Bounded_are_finite}. Therefore, $\rho^{A_m}(\mathbbm x_{n_k},\mathbbm y_{n_k})$ converges to $\rho^{A_m}(\mathbbm x,\mathbbm y)$. But $\rho^{A_m}(\mathbbm x_{n_k},\mathbbm y_{n_k})\leqslant \eta$ for $n_k>m$, by the argument above. Hence $\rho^{A_m}(\mathbbm x,\mathbbm y)\leqslant \eta$, for any natural $m$, and $\rho^A(\mathbbm x, \mathbbm y) = \sup\{\rho^{A_\alpha}(\mathbbm x,\mathbbm y): \alpha>0\}=\sup\{\rho^{A_m}(\mathbbm x,\mathbbm y): m\in \mathbb N\}\leqslant \eta$. Thus $\mathbbm x= \mathbbm y$, which gives us a contradiction, because $\rho(\mathbbm x, \mathbbm y)\geqslant\varepsilon$.
	\end{proof}

    The following is an analogue of \cite[Lemma 2.3]{BL}    
	\begin{lemma}
     \label{BL_Lemma2.3}
		If $A\subset \mc H$ is expansive, then there is $s>0$ such that $\rho^{A^s}(\mathbbm x,\mathbbm y)\leqslant \eta$ implies $\mathbbm x=\mathbbm y$. We call such $s$ an \emph{expansive radius}.
	\end{lemma}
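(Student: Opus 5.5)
Assume $A$ is expansive with constants $\varepsilon>0$ and $t>0$ as in Definition \ref{def_expansive}. The aim is to replace the small constant $\varepsilon$ by the fixed expansive constant $\eta$, at the price of enlarging the thickening radius. I will do this with Lemma \ref{SCA}, applied to the whole group, combined with the right invariance of $d$.

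First I treat the case $\varepsilon\geqslant\eta$. Then $\rho^{A^t}(\x,\y)\leqslant\eta\leqslant\varepsilon$ already forces $\x=\y$, so $s=t$ works. So assume $\varepsilon<\eta$.

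Next I apply Lemma \ref{SCA} to the family $A_\alpha=B(\alpha)$. These sets are bounded and increasing, and their union is $\mc H$. Since $\rho^{\mc H}=\rho^H$, the hypothesis of the lemma is exactly the expansiveness of the action with constant $\eta$. The lemma then yields $r>0$ such that $\rho^{B(r)}(\x,\y)\leqslant\eta$ implies $\rho(\x,\y)\leqslant\varepsilon$.

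I set $s=t+r$ and suppose $\rho^{A^{s}}(\x,\y)\leqslant\eta$. Fix $g\in A^t\cap H$; the goal is $\rho(g\x,g\y)\leqslant\varepsilon$. Take any $h\in B(r)\cap H$. By right invariance, $d(hg,g)=d(h,\mathbf 0)\leqslant r$. Hence $hg\in (A^t)^r\subset A^{t+r}$ by Proposition \ref{Thickening_the_thickening}, and also $hg\in H$. Therefore
\[
\rho(h(g\x),h(g\y))=\rho((hg)\x,(hg)\y)\leqslant\eta
\]
for all $h\in B(r)\cap H$, that is, $\rho^{B(r)}(g\x,g\y)\leqslant\eta$. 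The choice of $r$ gives $\rho(g\x,g\y)\leqslant\varepsilon$. Taking the supremum over $g$ gives $\rho^{A^t}(\x,\y)\leqslant\varepsilon$, and expansiveness of $A$ yields $\x=\y$.

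The only delicate point is the order of multiplication. The action satisfies $(hg)\x=h(g\x)$, so the $B(r)$-neighbourhood of $g$ must be formed as the left translates $hg$. The metric $d$ is right invariant, so $d(hg,g)=\|h\|$, and it is exactly this that places $hg$ inside the thickening $A^{t+r}$. With the paper's conventions (right invariant $d$, thickenings measured with $d$) the two fit together, so I expect no further obstacle.
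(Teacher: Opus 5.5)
Your proof is correct and follows essentially the same route as the paper. Both arguments apply Lemma \ref{SCA} to the balls $B(r)$ to get that $\rho^{B(r)}\leqslant\eta$ implies $\rho\leqslant\varepsilon$, set $s=t+r$, and use right invariance to place $B(r)g$ inside $A^{t+r}$ for $g\in A^t\cap H$. The differences are cosmetic: your case split on $\varepsilon\geqslant\eta$ is unnecessary but harmless, and you invoke Proposition \ref{Thickening_the_thickening} where the paper uses a direct triangle-inequality estimate.
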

	\begin{proof}
		Let $t,\varepsilon>0$ be as in Definition \ref{def_expansive} for $A$. By Lemma \ref{SCA}, there is $r>0$ such that $\rho^{B(r)}(\mathbbm x, \mathbbm y)\leqslant \eta$ implies $\rho(\mathbbm x, \mathbbm y)\leqslant \varepsilon$. Take $s = t+r$. Let $\x, \y\in\mathbbm X$ be such that $\rho^{A^s}(\x,\y)\leqslant \eta$. For any $g\in A^t\cap H$ we have $B(r)g\subset A^s$, because 
		if $f\in B(r)$, then for any $h\in A$ we get
		$d(fg,h)=d(f,hg^{-1})\leqslant d(f,\mathbf 0) + d(\mathbf 0, hg^{-1})\leqslant r+d(g, h)$. Thus $\rho^{B(r)g}(\x,\y)=\rho^{B(r)}(g\x,g\y)\leqslant \eta$, so $\rho(g\x,g\y)\leqslant \varepsilon$, for any $g\in A^t\cap H$. That gives us $\rho^{A^t}(\x,\y)\leqslant\varepsilon$, so $\x=\y$.
	\end{proof}
    
	\begin{fact}
		A subgroup $\mc G \subset \mc H$ is expansive if and only if $\textrm{\emph{Span}}(\mc G)$ is expansive.
	\end{fact}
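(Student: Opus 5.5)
One direction is immediate: $\mc G\subset \textrm{Span}(\mc G)$, so if $\mc G$ is expansive then so is $\textrm{Span}(\mc G)$ by the remark that supersets of expansive sets are expansive. The substance is the converse, and the plan is to show that thickenings of $\textrm{Span}(\mc G)$ are contained in thickenings of $\mc G$, using Lemma \ref{distance_between_G_and_Span(G)} together with Proposition \ref{Thickening_the_thickening}.

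Assume that $\textrm{Span}(\mc G)$ is expansive, with constants $\varepsilon>0$ and $t>0$ as in Definition \ref{def_expansive}, so that $\rho^{\textrm{Span}(\mc G)^t}(\x,\y)\leqslant\varepsilon$ implies $\x=\y$. By Lemma \ref{distance_between_G_and_Span(G)} there is $c>0$ with $d(g,\mc G)\leqslant c$ for every $g\in\textrm{Span}(\mc G)$; in other words, $\textrm{Span}(\mc G)\subset \mc G^c$. Applying Proposition \ref{Thickening_the_thickening} then gives
\[
\textrm{Span}(\mc G)^t\subset (\mc G^c)^t\subset \mc G^{c+t}.
\]

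Now set $t'=c+t$ and keep the same $\varepsilon$. Suppose $\rho^{\mc G^{t'}}(\x,\y)\leqslant\varepsilon$. Since $\textrm{Span}(\mc G)^t\cap H\subset \mc G^{t'}\cap H$, the supremum defining $\rho^{\textrm{Span}(\mc G)^t}$ is taken over a smaller set, so $\rho^{\textrm{Span}(\mc G)^t}(\x,\y)\leqslant\rho^{\mc G^{t'}}(\x,\y)\leqslant \varepsilon$, and therefore $\x=\y$. Thus $\mc G$ is expansive with constants $\varepsilon$ and $t'$.

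The only step requiring care is the inclusion $\textrm{Span}(\mc G)\subset\mc G^c$, which rests on the uniform bound of Lemma \ref{distance_between_G_and_Span(G)}. That lemma is stated for a general homogeneous metric $d$, being reduced to $d_{CK}$ via Lemma \ref{Homog_norm_equiv_to_CK}, so it applies directly to our fixed $d$. No obstacle is expected beyond bookkeeping with the thickening constants.
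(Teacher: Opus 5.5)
Your proof is correct and follows essentially the same route as the paper: the forward direction comes from the superset remark, and for the converse Lemma~\ref{distance_between_G_and_Span(G)} gives $\textrm{Span}(\mathcal G)\subset\mathcal G^{c}$, which Proposition~\ref{Thickening_the_thickening} turns into $\textrm{Span}(\mathcal G)^t\subset\mathcal G^{t+c}$. The only cosmetic difference is that the paper phrases both directions in terms of expansive radii (Lemma~\ref{BL_Lemma2.3}) rather than the pair $(\varepsilon,t)$ of Definition~\ref{def_expansive}, and this changes nothing.
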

    \begin{proof}
        Since $\mc G\subset \textrm{Span}(\mc G)$, we also have $\mc G^t\subset (\textrm{Span}(\mc G))^t$ for ant $t>0$. Hence, if $\mc G$ is expansive with an expansive radius $t>0$, then $\textrm{Span}(\mc G)$ is also expansive with the same radius. 
        From Lemma \ref{distance_between_G_and_Span(G)}, it follows that there is $s>0$ such that $\textrm{Span}(\mc G)\subset \mc G^s$.
        By Proposition \ref{Thickening_the_thickening}, $(\textrm{Span}(\mc G))^t\subset (\mc G^s)^t \subset \mc G^{t+s}$ for any $t>0$. Therefore, if $\textrm{Span}(\mc G)$ is expansive with radius $t$, then $\mc G$ is expansive with an expansive radius $t+s$.
    \end{proof}

    In \cite{BL}, the authors provide a definition of \emph{coding} one set by another. This relation is invariant under translates by any vectors from $\mathbb R^D$. We provide two notions of coding---an analogue of the original, and its weaker version, which is invariant under translates by elements of the discrete group $H$. This weaker version will allow us to carry out some of our arguments, while maintaining greater simplicity.
    
	\begin{definition}
		Let $A,B$ be subsets of $\mc H$. We say that $A$ \emph{weakly codes} $B$ if $\rho^{A}(\x,\y)\leqslant \eta$ implies $\rho^{B}(\x,\y)\leqslant\eta$ for all $\x,\y\in\mathbbm X$. We say that $A$ \emph{codes} $B$ if for any $g\in\mc H$, $\rho^{Ag}(\x,\y)\leqslant \eta$ implies $\rho^{Bg}(\x,\y)\leqslant\eta$ (i.e., $Ag$ weakly codes $Bg$).
	\end{definition}
    
	\begin{remark}
		Weak coding of $B$ by $A$ is equivalent to the statement: for any $g\in H$, $\rho^{Ag}(\x,\y)\leqslant \eta$ implies $\rho^{Bg}(\x,\y)\leqslant\eta$.
	\end{remark}
    
	\begin{remark}
		\label{coding_of_transformed_sets}
		If $A$ (weakly) codes $B$, then $Ag$ (weakly) codes $Bg$ for any $g\in \mc H$ ($g\in H)$. If $A_\alpha$ (weakly) codes $B_\alpha$, then $\bigcup_\alpha A_\alpha$ (weakly) codes $\bigcup_\alpha B_\alpha$. As a consequence, if $A$ (weakly) codes $B$ and $C\subset \mc H$ ($C\subset H$), then $A\cdot C$ (weakly) codes $B\cdot C$.
	\end{remark}
    \begin{proof}
     The first statement follows directly from the definitions. In order to prove the second one, take two families $\{A_\alpha\}_{\alpha \in \mc A}$ and $\{B_\alpha\}_{\alpha \in \mc A}$ of subsets of $\mc H$. If $A_\alpha$ codes $B_\alpha$ for all $\alpha$ then $A_\alpha g$ weakly codes $B_\alpha g$ for every $g \in \mc H$ and $\alpha \in \mc A$. We have $\left(\bigcup_{\alpha} A_\alpha\right)g = \bigcup_{\alpha} A_\alpha g$, so if $\rho^{\left(\bigcup_{\alpha} A_\alpha\right)g}(\x, \y) \leqslant \eta$, then for every $\alpha$ we have $\rho^{A_\alpha g}(\x, \y)\leqslant \eta$. For each $\alpha$ we then have $\rho^{B_\alpha g}(\x, \y)\leqslant \eta$, so $\rho^{\bigcup_{\alpha} B_\alpha g}(\x, \y) = \rho^{\left(\bigcup_{\alpha} B_\alpha\right)g}(\x, \y) \leqslant \eta$, meaning that $\bigcup_\alpha A_\alpha$ codes $\bigcup_\alpha B_\alpha$. For the argument regarding weakly coding, it suffices to consider $g = \mathbf 0$.

        For the final statement, notice that for $A, C\subset \mc H$ we have $AC = \bigcup_{g\in C}Ag$. If $A$ codes $B$, then, by the first two statements, this union codes $\bigcup_{g\in C}Bg = BC$. If $A$ weakly codes $B$, we must restrict $C$ to subsets of the discrete group $H$, since weak coding of $Bg$ by $Ag$ is guaranteed only for $g\in H$.
    \end{proof}
	
	\begin{remark}
		By Lemma \ref{BL_Lemma2.3}, $A$ is expansive with expansive radius $t>0$ if and only if $A^t$ weakly codes $\mc H$.
	\end{remark}
    
    Let $\lambda >0$ be the constant given by Corollary \ref{d(g,H)<=lambda} (the universal maximal distance between any element of $\mc H$ and the discrete group $H$).     
    \begin{lemma} 
        If $A\subset \mc H$ is expansive with expansive radius $t$ and $A\cdot \mc Z = A$, then $A^{t+\lambda}$ codes $\mc H$.
    \end{lemma}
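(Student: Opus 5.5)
The plan is to reduce coding along arbitrary translates $g\in\mc H$ to weak coding along translates by elements of $H$, which we already control. By the remark following Remark \ref{coding_of_transformed_sets}, expansiveness with radius $t$ means that $A^t$ weakly codes $\mc H$. Hence, by Remark \ref{coding_of_transformed_sets}, $A^t h$ weakly codes $\mc H h = \mc H$ for every $h\in H$. So, for a given $g\in\mc H$, it suffices to find $h\in H$ with $A^t h\subset A^{t+\lambda} g$. Then $\rho^{A^{t+\lambda}g}(\x,\y)\leqslant\eta$ implies $\rho^{A^t h}(\x,\y)\leqslant\eta$, hence $\rho^{\mc H}(\x,\y)\leqslant \eta$, and thus $\rho^{\mc H g}(\x,\y)=\rho^{\mc H}(\x,\y)\leqslant\eta$. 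This is exactly the statement that $A^{t+\lambda}$ codes $\mc H$.

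To find $h$, fix $g\in\mc H$ and use Corollary \ref{d(g,H)<=lambda} to pick $h\in H$ with $d(h,g)\leqslant\lambda$. Write $h = fg$, where $f = hg^{-1}$ satisfies $\|f\| = d(h,g)\leqslant\lambda$ by right invariance of $d$. The key observation is that $A\mc Z=A$ forces $A^t\mc Z = A^t$: right multiplication by $(0,u)$ is an isometry of $d$, so $d(k(0,u),A)=d(k,A(0,u)^{-1})=d(k,A)$. Therefore Lemma \ref{AZ=Z_implies_Ag=gA} applies to $A^t$, and $A^t h = A^t f g$.

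It then remains to show $A^t f\subset A^{t+\lambda}$. Take $k\in A^t$. By Lemma \ref{AZ=Z_implies_Ag=gA} applied to $A$ and the element $f$, we have $A = f^{-1}Af$, hence $d(kf,A)=d(kf,Af)$. This step needs a bit of care, since the metric is only right invariant and $Af\neq A$ in general. I would argue as follows. For $a\in A$, write $d(kf,a)=\|kfa^{-1}\|$ and compare it with $\|ka^{-1}\|$ using the commutation formula \eqref{eq:gh=hg(0,omega)}. We have $kfa^{-1}= f'\cdot k a'^{-1}$ for suitable $a'\in A$, obtained by moving $f$ to the left at the cost of a central element $(0,c)$, which is absorbed into $a'$ since $A\mc Z=A$. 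Explicitly, $kf = f k (0,\omega(v_k,v_f))$, so $kfa^{-1} = f\, k\, (a(0,-\omega(v_k,v_f)))^{-1}$, and $a'=a(0,-\omega(v_k,v_f))\in A$. The triangle inequality for the norm then gives $\|kfa^{-1}\|\leqslant \|f\|+\|ka'^{-1}\|\leqslant \lambda + \|ka'^{-1}\|$. Taking the infimum over $a$ (equivalently over $a'$, since $a\mapsto a'$ is a bijection of $A$ for fixed $k$) yields $d(kf,A)\leqslant\lambda+d(k,A)\leqslant t+\lambda$.

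The main obstacle is this last step: controlling the distance after right multiplication by the small element $f$. This is where both the centrality assumption $A\mc Z=A$ and the noncommutativity enter, and the commutator shift must be absorbed correctly into $A$. Once it is verified, combining the three steps gives the lemma.
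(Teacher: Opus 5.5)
Your proof is correct and follows essentially the paper's route. You approximate $g$ by an element of $H$ within $\lambda$, prove the inclusion $A^t h\subset A^{t+\lambda}g$ by absorbing the commutator into $A$ via $A\mc Z=A$, and transfer the weak coding of $\mc H$ by $A^t$ along the $H$-translate. The paper verifies the inclusion through a chain of distance identities using $Ag_1g_2=Ag_2g_1$ (approximating $g^{-1}$ rather than $g$), whereas you do it elementwise with \eqref{eq:gh=hg(0,omega)} and the triangle inequality for the norm.

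One slip to delete: the sentence asserting $d(kf,A)=d(kf,Af)$ is false in general, since together with right invariance it would give $A^tf\subset A^t$ for every $f$. Your subsequent argument never uses it.
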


    \begin{proof}
        Fix $g\in\mc H$. By Corollary \ref{d(g,H)<=lambda} there is $\tilde g\in H$ such that $d(g^{-1},\tilde g^{-1}) \leqslant \lambda$. Take any $h\in A^t \tilde g$. By Lemma \ref{AZ=Z_implies_Ag=gA} we have $Ag_1g_2 = Ag_2g_1$ for any $g_1,g_2\in\mc H$. Therefore
        \begin{align*}
            d(h, Ag) &= d(\mathbf 0, Ag h^{-1}) = d(\mathbf 0,A h^{-1} g ) = d(g^{-1} h, A)\\
            &\leqslant d(g^{-1}h, \tilde g^{-1}h) + d(\tilde g^{-1}h, A) = d(g^{-1}, \tilde g^{-1}) + d(\mathbf 0, A h^{-1}\tilde g) \\
            & = d(g^{-1}, \tilde g^{-1}) + d(\mathbf 0, A\tilde g h^{-1}) = d(g^{-1}, \tilde g^{-1}) + d(h, A\tilde g) \leqslant \lambda+t.
        \end{align*}
        Hence $A^t\tilde g \subset A^{t+\lambda}g$. If $A^t$ weakly codes $\mc H$, so does $A^t\tilde g$, because $\tilde g\in H$. Therefore $A^{t+\lambda}g$ weakly codes $\mc H$, for any $g\in\mc H$.
    \end{proof}

    \begin{corollary}
        A vertical group $\mc G$ is expansive if and only if $\mc G^t$ codes $\mc H$ for some $t>0$.
    \end{corollary}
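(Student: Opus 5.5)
Both directions reduce to the preceding lemma and the remark after Lemma \ref{BL_Lemma2.3}; the only real input is that a vertical group is invariant under right multiplication by $\mc Z$.

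For the direction ``$\Rightarrow$'', assume $\mc G=V\times\mathbb R$ is an expansive vertical group, and let $t>0$ be an expansive radius from Lemma \ref{BL_Lemma2.3}. Since $\mc Z\subset\mc G$ and $\mc G$ is a subgroup, we have $\mc G\cdot\mc Z=\mc G$, so the preceding lemma applies with $A=\mc G$. It shows that $\mc G^{t+\lambda}$ codes $\mc H$, where $\lambda$ is the constant of Corollary \ref{d(g,H)<=lambda}. This settles this direction with thickening radius $t+\lambda$.

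For the direction ``$\Leftarrow$'', suppose $\mc G^t$ codes $\mc H$ for some $t>0$. Taking $g=\mathbf 0$ in the definition of coding shows that $\mc G^t$ weakly codes $\mc H$. Hence $\rho^{\mc G^t}(\x,\y)\leqslant\eta$ implies $\rho^{\mc H}(\x,\y)\leqslant\eta$, and expansiveness of the action then gives $\x=\y$. This is exactly Definition \ref{def_expansive} with $\varepsilon=\eta$ and the same $t$, so $\mc G$ is expansive. Equivalently, one can invoke the remark following Lemma \ref{BL_Lemma2.3}.

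No step is a genuine obstacle. The only thing to check is the hypothesis $\mc G\mc Z=\mc G$ of the preceding lemma, which holds because $\mc G$ is a subgroup containing $\mc Z$.
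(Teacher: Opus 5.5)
Your proof is correct and matches the paper's intended derivation. The corollary is stated without a separate proof, and it follows exactly as you argue. For ``$\Rightarrow$'', apply the preceding lemma to $A=\mc G$, using $\mc G\mc Z=\mc G$. For ``$\Leftarrow$'', coding implies weak coding (take $g=\mathbf 0$), which gives expansiveness with $\varepsilon=\eta$, as in the remark after Lemma \ref{BL_Lemma2.3}.
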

	
%%%%%%%%%%%%%%%%%%%%%%%%%%%%%%%%%%%%%%%%%%%%%%%%%%%%%%%%%%%%%%%%%%%%%%%%%%%%%%%%%%%%%%%%%%%%%%%%%%%%%%%%%%%%%%%%%%%%%%%%%%%%%%%%%%%%%%%%%%%%%%%%%%%%%%%%%%%%%%%%%%%%
    \section{Expansiveness of Vertical Groups}

    In this section, we focus on describing the expansiveness in the class of vertical groups, which is the main goal for the presented work. In the case of $\mathbb{Z}^D$ actions considered in \cite{BL}, the main interest, as expansive sets,  was on linear subspaces of $\mathbb R^D$. It is obvious in that case, that  if $\mathbbm X$ is infinite, then the trivial space $\{0\}$ is nonexpansive, because we cannot have finite expansive sets (see the proof of Theorem \ref{Z_never_expansive}). The existence of a nonexpansive linear subspace is used to show the existence of $D-1$-dimensional nonexpansive subspace. In our case, however, the smallest vertical subgroup is the vertical axis $\mc Z$. Its nonexpansivness not only allows us to show the existence of other nonexpansive vertical groups---it is a necessary condition for such, as any subset of a nonexpansive set is nonexpansive, and every vertical group contains $\mc Z$. 
    
    \begin{theorem}
		\label{Z_never_expansive}
		If $\mathbbm X$ is infinite, the vertical axis $\mc Z$ is not expansive.
	\end{theorem}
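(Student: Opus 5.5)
The plan is to argue by contradiction. Assume $\mc Z$ is expansive, so by Lemma \ref{BL_Lemma2.3} there is an expansive radius $s>0$ with
\[
\rho^{\mc Z^s}(\x,\y)\leqslant \eta \ \Longrightarrow\ \x=\y .
\]
The goal is to deduce that some \emph{bounded} set $K\subset\mc H$ is expansive. That is impossible when $\mathbbm X$ is infinite, by the standard finite-set argument. Indeed, if $\rho^{K}(\x,\y)\leqslant\varepsilon$ forces $\x=\y$ and $K\cap H$ is finite (Proposition \ref{Bounded_are_finite}), then the map $\x\mapsto (g\x)_{g\in K\cap H}$ is injective. Its image is covered by finitely many sets of $\rho^{K}$-diameter at most $\varepsilon$, and each such set is a singleton, so $\mathbbm X$ would be finite.

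\emph{Step 1 (structure of the strip).} Since $\mc Z$ is a vertical group with $\pi(\mc Z)=\{0\}$, Lemma \ref{Thickening_vertical_group} and Proposition \ref{Expan_in_d_iff_expan_in_C-K} let me work with $d_{CK}$, where
\[
\mc Z^s=\{(v,u):|v|\leqslant s\}.
\]
Hence $\mc Z^s\cap H=FZ$ for a finite set $F\subset H$ with $\mathbf 0\in F$, using \eqref{form_of_discrete_H}. Set $T=z$ and $\rho'(\x,\y)=\max_{f\in F}\rho(f\x,f\y)$, which is a compatible metric. Because $z$ is central, expansiveness of $\mc Z$ says exactly that $T$ is an expansive homeomorphism of $(\mathbbm X,\rho')$ with constant $\eta$.

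\emph{Step 2 (truncation).} Apply Lemma \ref{SCA} to the increasing family $A_n=F\{z^k:|k|\leqslant n\}$, whose union is $\mc Z^s\cap H$. For a given $\varepsilon>0$ this yields $N$ such that $\eta$-closeness on $A_N$ forces $\rho(\x,\y)\leqslant\varepsilon$. Translating by $z^k$, I get: if two points are $\eta$-close along $FZ$ in a window of length $2N$ around time $k$, then $z^k\x$ and $z^k\y$ are $\varepsilon$-close.

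\emph{Step 3 (distortion, the crucial step).} The point is that $z$ is exponentially distorted in nothing, but quadratically distorted in $H$: $z^{m^2}=x_1^my_1^mx_1^{-m}y_1^{-m}$, so $\|z^{k}\|\asymp\sqrt{|k|}$. A segment of $Z$ of length $L$ therefore lies in a ball of radius $\asymp\sqrt L$.

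I would use the commutator identity and the commutation $z^k g=gz^k$ to transport the information in Step 2. Windows along $Z$ should be replaced by windows in $B(C\sqrt L)$, and those in turn by the conjugated strips $g\,\mc Z^s\,g^{-1}=\mc Z^s$ translated by horizontal elements $x_1^a y_1^b$ of size $\asymp\sqrt L$. The aim is that $\eta$-agreement on a fixed bounded set $K=B(R)$ propagates, by repeated use of the weak coding in Step 2 together with Remark \ref{coding_of_transformed_sets}, to $\eta$-agreement on all of $\mc Z^s$. Then $K$ is expansive, giving the contradiction above.

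This propagation step is where I expect the real difficulty. Expansive $\mathbb Z$-actions on infinite spaces do exist (the full shift), so the bare fact from Step 1 is not contradictory. The argument must use the non-abelian structure, namely that $z$ is a commutator of elements whose translates of $\mc Z^s$ stay inside a controlled thickening. If the direct propagation fails, I would instead count separated patterns. Let $N_\varepsilon(A)$ be the number of $\varepsilon$-distinguishable patterns on $A\cap H$. I would compare the growth of $N_\varepsilon(FZ\cap B(r))$, which is at most exponential in the length $r^2$ of the segment, with the constraint coming from the fact that patterns on $B(r)$ are determined by patterns on $\mc Z^s$-windows, and aim for a contradiction between these two growth rates.
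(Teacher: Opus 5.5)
You have the right frame: reduce to showing that a bounded set is expansive (which forces $\mathbbm X$ to be finite), truncate the strip with Lemma~\ref{SCA}, and note that the abelian data of Step~1 cannot give a contradiction on its own, so $z^{K^2}=x^Ky^Kx^{-K}y^{-K}$ must enter. The gap is Step~3. It is the only place where the Heisenberg structure is used, and it states an aim, not an argument, so the proof stops exactly where its content lies. That a segment of $Z$ of length $L$ sits in a ball of radius $\asymp\sqrt L$ yields no coding statement by itself. What is needed is that a \emph{short} window of the strip weakly codes a strictly \emph{taller} one, and your sketch says neither how agreement on one window is moved to another nor what that costs. The counting fallback does not help. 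It presupposes the same propagation (patterns on $B(r)$ determined by strip windows). Even granting it, an $e^{O(r^2)}$ bound on patterns over $B(r)$, with $|B(r)\cap H|\asymp r^{2D+2}$, only says the entropy vanishes, and you name no lower bound that this contradicts.

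The missing mechanism compares a linear cost with a quadratic gain. Let $x=x_1$, $y=y_1$, $I_m=\{z^j:|j|\leqslant m\}$, $I^+_m=\{z^j:0\leqslant j\leqslant m\}$, $I^-_m=(I^+_m)^{-1}$. Let $I^s_n$ be the set of $(v,u)\in\mathcal Z^s\cap H$ with $|u|\leqslant n+\tfrac12$ (the $\tfrac12$ handles half-integer heights in $H$). Then $I^s_nI_m=I^s_{n+m}$, and $I_m$ is central.

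(a) By your Step~2, some $I^s_n$ weakly codes the finite set $I^s_0\{x^{\pm1},y^{\pm1}\}$.

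(b) Right-multiplying by $I_m$ and by elements of $H$ (Remark~\ref{coding_of_transformed_sets}) and inducting, $I^s_{Mn}$ weakly codes $I^s_{(M-L)n}w$ for every word $w$ in $x^{\pm1},y^{\pm1}$ of length $L\leqslant M$. Each letter costs height $n$.

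(c) A word with $K$ letters $x$ and $K$ letters $y$ equals $y^Kx^Kz^j$, where $j$ counts pairs in which an $x$ precedes a $y$. Every $j\in\{0,\dots,K^2\}$ occurs; e.g.\ $x^qy^{K-r}xy^rx^{K-q-1}$ gives $j=qK+r$. Hence $I^s_{4Kn}$ weakly codes $I^s_{2Kn}y^Kx^KI^+_{K^2}$. Going back along $x^{-K}y^{-K}$, this weakly codes $I^s_0I^+_{K^2}$. Using $y^Kx^K=x^Ky^Kz^{-K^2}$, the same argument gives $I^s_0I^-_{K^2}$. So $I^s_{4Kn}$ weakly codes $I^s_{K^2}$.

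Now take $K>4n$ and $N=4Kn$. Then $I^s_N$ weakly codes $I^s_{N+1}=I^s_NI_1$, which weakly codes $I^s_{N+2}$, and so on. Hence $\eta$-agreement on the finite set $I^s_N$ forces agreement on $\mathcal Z^s\cap H$, hence equality, and your finite-set argument finishes. Without (b) and (c) — coding along a word of length $L$ uses up height $Ln$, while a commutator loop of length $4K$ gains height $K^2$ — Step~3 gives no propagation.
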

	For $n\in \mathbb N$, we put $I^+_n=\{\mathbf 0, z, \ldots, z^{n}\}$, $I^-_n=\{\mathbf 0, z^{-1}, \ldots, z^{-n}\}$, and $I_n=\{z^{-n}, \ldots, z^{-1}, \mathbf 0, z, \ldots, z^{n}\}$. Since $\mc Z$ is the center of $\mc H$, for any $A\subset \mc H$ we have $I_nA=AI_n$. Similarly, for $t>0$ we define
	$I^{t}_n=\{(v,u)\in\mc Z^{t}\cap H:-n\leqslant u\leqslant n\}$. Note that $I^{t}_n \not = {(I_n)}^t$.
	\begin{lemma}
    \label{ItnIm=Itn+m}
		We have $I^{t}_n\cdot I_m = I^{t}_{n+m}$.
	\end{lemma}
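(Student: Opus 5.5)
The plan is to prove the two inclusions separately. The only tools are the explicit multiplication rule $(v,u)\cdot(0,k)=(v,u+k)$, the fact that $H$ is a group containing $z$, and the right invariance of the homogeneous metric $d$.

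First I will check that the thickening $\mc Z^t$ is invariant under right multiplication by central elements. For $g\in\mc H$ and $k\in\mathbb Z$, right invariance gives $d(gz^k,\mc Z)=d(g,\mc Z z^{-k})=d(g,\mc Z)$, since $\mc Z z^{-k}=\mc Z$. Hence $\mc Z^t z^k=\mc Z^t$. Also $H z^k=H$ because $z\in H$. Consequently $(\mc Z^t\cap H)z^k=\mc Z^t\cap H$ for every $k\in\mathbb Z$.

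For the inclusion $I^t_n\cdot I_m\subset I^t_{n+m}$, take $(v,u)\in I^t_n$ and $z^k$ with $|k|\leqslant m$. The product is $(v,u)z^k=(v,u+k)$, which lies in $\mc Z^t\cap H$ by the previous paragraph. Since $|u|\leqslant n$, we get $|u+k|\leqslant n+m$, so the product lies in $I^t_{n+m}$.

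For the reverse inclusion, take $(v,u)\in I^t_{n+m}$. It suffices to find an integer $k$ with $|k|\leqslant m$ and $|u-k|\leqslant n$. Then $(v,u)=(v,u-k)z^k$, and $(v,u-k)=(v,u)z^{-k}\in I^t_n$, again by the first paragraph. If $|u|\leqslant n$, take $k=0$. If $n<u\leqslant n+m$, take $k=\lceil u-n\rceil$. Then $1\leqslant k\leqslant m$ because $m$ is an integer, and
\[
n-1< u-k\leqslant n .
\]
Since $n\geqslant 1$, this gives $u-k\geqslant -n$. The case $-n-m\leqslant u<-n$ is symmetric with $k=-\lceil -u-n\rceil$.

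The only delicate point is this rounding step. The coordinate $u$ may be a half-integer by \eqref{form_of_discrete_H}, so the argument needs $n\geqslant 1$, i.e.\ $n\in\mathbb N$ as in the definition of $I_n$. For $n=0$ the statement would actually fail for half-integer $u$, since $I^t_0$ then only contains points with $u=0$.
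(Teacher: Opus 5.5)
Your proof is correct and is essentially the paper's argument: both inclusions rest on right invariance, $d(hz^k,\mc Z)=d(h,\mc Z)$, together with a choice of $z^k\in I_m$ satisfying $|u-k|\leqslant n$. Your explicit rounding step justifies what the paper only asserts, and your caveat about $n=0$ is correct and relevant. The equality can fail there, for instance $(e_1+f_1,\tfrac12)\in I^t_1\setminus I^t_0 I_1$ for $d=d_{CK}$ and $t\geqslant\sqrt2$, yet the proof of Lemma \ref{OdcinekKodujeNadIPodSoba} uses exactly this case (e.g.\ $I^t_0I_{K^2}=I^t_{K^2}$), so that argument needs a small adjustment, e.g.\ starting from $I^t_1$ instead of $I^t_0$.
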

	\begin{proof}
        Let $h = (v_h,u_h)\in I^t_n$ and $g = (0,u)\in I_m$. Then $hg = (v_h, u_h + u)\in H$. We have $d(hg, \mc Z) = d(h,\mc Z g^{-1})= d(h,\mc Z)\leqslant t$ and $|u_h+u|\leqslant n+m$. Hence, $hg\in I^t_{n+m}$.

        Conversely, let $h = (v_h, u_h) \in I^t_{n+m}$. For $g = (0,u)\in I_m$ we have $hg^{-1}=(v_h,u_h-u)$. There is an element $g \in I_m$ such that $|u_h-u|\leqslant n$. Moreover, similarly as above, we have $d(hg^{-1}, \mc Z)\leqslant t$. Thus $hg^{-1}\in I^t_{n}$, so $h\in I^t_n \cdot I_m$.
	\end{proof}
	
	\begin{lemma}
		\label{consequence_of_SCA_for_Z}
		If $\mc Z$ is expansive with expansive radius $t>0$, then for any set $B\subset \mc H$ satisfying $|B\cap H|<\infty$, there is $n$ such that $I^t_n$ weakly codes $B$.
	\end{lemma}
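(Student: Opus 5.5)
The plan is to derive the statement from Lemma \ref{SCA}, applied to the increasing family $A_n := I^t_n$ (indexed by $n\in\mathbb N$, extended to real $\alpha$ by $A_\alpha = I^t_{\lceil\alpha\rceil}$). We then transport the local conclusion to each point of $B\cap H$ by right translation, using that $\mc Z$ is central.

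First we check the hypotheses of Lemma \ref{SCA}. Each $I^t_n$ is bounded: an element $(v,u)\in I^t_n$ satisfies $d_{CK}((v,u),\mc Z)=|v|\leqslant c\,t$ by Lemma \ref{Thickening_vertical_group} and Lemma \ref{Homog_norm_equiv_to_CK}, and $|u|\leqslant n$. The family is increasing, and its union is $\mc Z^t\cap H$. Since $t$ is an expansive radius for $\mc Z$ (Lemma \ref{BL_Lemma2.3}), $\rho^{\mc Z^t}(\x,\y)\leqslant\eta$ implies $\x=\y$. Hence, for any $\varepsilon>0$, Lemma \ref{SCA} yields $n_0$ such that $\rho^{I^t_{n_0}}(\x,\y)\leqslant\eta$ implies $\rho(\x,\y)\leqslant\varepsilon$. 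We take $\varepsilon=\eta$, so that $I^t_{n_0}$ weakly codes $\{\mathbf 0\}$.

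Next we handle $B$. Write $B\cap H=\{g_1,\dots,g_k\}$; note that only $B\cap H$ matters for $\rho^B$. By Remark \ref{coding_of_transformed_sets}, $I^t_{n_0}g_i$ weakly codes $\{g_i\}$, since $g_i\in H$. It therefore suffices to find $n$ with $I^t_{n_0}g_i\subset I^t_n$ for every $i$; then $I^t_n$ weakly codes $\bigcup_i\{g_i\}$, which has the same $\rho$-content as $B$.

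The inclusion is the main point to check. Take $h=(v_h,u_h)\in I^t_{n_0}$ and $g_i=(v_i,u_i)$. The product $hg_i$ lies in $H$, and its distance to $\mc Z$ is $d(hg_i,\mc Z)\leqslant d(hg_i,\mc Z g_i)+d(\mc Z g_i,\mc Z)$. Right invariance of $d$ gives $d(hg_i,\mc Zg_i)=d(h,\mc Z)\leqslant t$, but the second term is only bounded by $\|g_i\|$, so the translate does not stay inside $\mc Z^t$. This is the expected obstacle: $I^t_n$ requires distance at most $t$ from $\mc Z$, yet translation by a $g_i$ with $v_i\neq 0$ moves points horizontally.

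To fix this, replace $t$ in the application of Lemma \ref{SCA} by a larger thickening. Since $t$ is an expansive radius, $\mc Z^t$ codes everything in the sense of Remark \ref{coding_of_transformed_sets}, but we need the statement for $I^t_n$ itself. So we argue differently: $\mc Z^t\cap H$ weakly codes $\mc H$, and in particular codes $B$. We then apply the compactness argument of Lemma \ref{SCA} directly to the conclusion "$\rho^{B}\leqslant\eta$". Suppose that for every $n$ there are $\x_n,\y_n$ with $\rho^{I^t_n}(\x_n,\y_n)\leqslant\eta$ but $\rho(g\x_n,g\y_n)>\eta$ for some $g\in B\cap H$. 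Since $B\cap H$ is finite, we may fix one such $g$ after passing to a subsequence, and then pass to a convergent subsequence $(\x_n,\y_n)\to(\x,\y)$. Continuity of the finitely many maps involved gives $\rho^{I^t_m}(\x,\y)\leqslant\eta$ for all $m$, so $\rho^{\mc Z^t}(\x,\y)\leqslant\eta$ and hence $\x=\y$. On the other hand, $\rho(g\x,g\y)\geqslant\eta>0$, a contradiction. This compactness step, rather than translation, is the route I would actually write up; it avoids the geometric issue entirely, and the main care needed is the finiteness of $I^t_m$ and of $B\cap H$.
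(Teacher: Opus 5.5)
Your final compactness argument is correct and is essentially the paper's proof. The paper first uses continuity of the finitely many homeomorphisms $g\in B\cap H$ on the compact space to pick $\varepsilon>0$ such that $\rho(\x,\y)\leqslant\varepsilon$ implies $\rho^B(\x,\y)\leqslant\eta$, and then quotes Lemma \ref{SCA} for the exhaustion $\mc Z^t\cap H=\bigcup_n I^t_n$; you instead fold that continuity into the limit step, and your translation detour (correctly abandoned) arose only because you fixed $\varepsilon=\eta$ at the outset.
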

	\begin{proof}
		Since $|B\cap H|<\infty$, there is $\varepsilon>0$ such that $\rho(\x,\y)\leqslant\varepsilon$ implies $\rho^B(\x,\y)\leqslant \eta$. We have $\mc Z^t\cap H=\bigcup_{n\in \mathbb N} I^t_n$ and we conclude by using Lemma \ref{SCA}.
	\end{proof}
	\begin{lemma}
    \label{OdcinekKodujeNadIPodSoba}
		If $\mc Z$ is expansive with expansive radius $t>0$, then there is $N\in \mathbb N$ such that $I^t_N$ weakly codes $I^t_{N+1}$.
	\end{lemma}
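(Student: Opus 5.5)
Apply Lemma \ref{consequence_of_SCA_for_Z} to a suitably chosen finite set, then use the way $I^t_n$ is built from the ``slab'' $I^t_1$ by right translation by powers of $z$. Since $\mc Z$ is expansive with radius $t$, take $B = I^t_1 z$, or more economically $B = \{h\in \mc Z^t\cap H : n < u_h \leqslant n+1\}$ for a shifted index. The sets $I^t_n$ are bounded: the $u$-coordinate is bounded by $n$, and $d(\cdot,\mc Z)\leqslant t$ bounds the $v$-coordinate by Lemma \ref{Thickening_vertical_group}. So by Proposition \ref{Bounded_are_finite} each $I^t_n$ is finite, and Lemma \ref{consequence_of_SCA_for_Z} applies to them.

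The plan is a pigeonhole/compactness-free argument on the two ``caps''.
\begin{enumerate}
\item Apply Lemma \ref{consequence_of_SCA_for_Z} to $B = I^t_1$. This gives $n_0$ such that $I^t_{n_0}$ weakly codes $I^t_1$.
\item Translate the coding. The set $I_{n_0+1}$ is contained in $H$ and is central. Since $I^t_1 \subset I^t_{n_0}$, it suffices to show that $I^t_{n_0}$ weakly codes the top slice $I^t_1 z^{n_0}$ and the bottom slice $I^t_1 z^{-n_0}$; by Lemma \ref{ItnIm=Itn+m}, $I^t_{n_0+1} = I^t_{n_0}\cup I^t_1 z^{n_0}\cup I^t_1 z^{-n_0}$.
\item Reduce to the origin. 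By Remark \ref{coding_of_transformed_sets}, weak coding is preserved under right multiplication by elements of $H$. Hence it is enough to know that a set of the form $I^t_{n_0} z^{-n_0}$ weakly codes $I^t_1$.
\end{enumerate}

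To arrange step 3, I would choose the set to which Lemma \ref{consequence_of_SCA_for_Z} is applied more cleverly. The one-sided pieces $\{h\in\mc Z^t\cap H: 0\leqslant u_h\leqslant n\}$, which are $I^t_n$ intersected with the upper half, increase to the whole half-slab $\mc Z^t\cap H\cap\{u\geqslant 0\}$. Lemma \ref{SCA} requires that this union separate points at level $\eta$. This is the main obstacle: expansiveness of $\mc Z^t$ gives separation only for the two-sided slab, not for a half of it.

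First attempt: take $N$ large and use the fact that $I^t_N$ is symmetric and codes $I^t_1$. Let $M$ be such that $I^t_M$ weakly codes $I^t_1$. Then $I^t_M z^{k}$ weakly codes $I^t_1 z^k$ for every integer $k$. To cover $I^t_{N+1}\setminus I^t_N$, which lies within $I^t_1 z^{\pm N}$, I want $I^t_M z^{\pm N}\subset I^t_N$. That would need $M+N\leqslant N$, which fails.

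So I would instead argue by contradiction with compactness. Suppose no $N$ works. Then for every $N$ there are $\x_N,\y_N$ with $\rho^{I^t_N}\leqslant\eta$ but $\rho^{I^t_{N+1}}>\eta$, so some $g_N$ in a top or bottom slice has $\rho(g_N\x_N,g_N\y_N)>\eta$. Replace the pair by $z^{\mp N}$ times it; this uses centrality, $z^{\pm N}\in H$, and the fact that $I^t_N z^{-N}$ is the region $u\in[-2N,0]$. We get pairs agreeing within $\eta$ on an ever longer one-sided slab $\{-2N\leqslant u\leqslant 0\}$ but differing by more than $\eta$ at bounded height. Pass to a limit in $\mathbbm X$; $\rho^F$ is continuous for finite $F$. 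The limit pair $\x\neq\y$ is $\eta$-close on the entire lower half-slab. Iterating this with the half-slab statement is where I expect the real work.

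One could then try to show that a lower half-slab cannot have this property when the full slab is expansive, using Lemma \ref{consequence_of_SCA_for_Z} on the finite set $I^t_1$ to propagate $\eta$-closeness upward step by step. This step is exactly where the argument may circle back to the original claim, so I flag it as the crux.
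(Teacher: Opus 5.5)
Your proposal does not prove the lemma. It stops exactly at the step that carries all of the content. Step 1 carries no information, since $I^t_n$ trivially weakly codes $I^t_1$ for every $n\geqslant 1$. The compactness argument only turns the failure of the lemma into a pair $\x\neq\y$ that is $\eta$-close on a half-slab $\{h\in\mc Z^t\cap H: u_h\leqslant 0\}$ (or $u_h\geqslant 0$). Excluding such pairs is equivalent to the lemma: the lemma implies it by translating along $Z$, and your argument gives the converse. So the reduction is circular, as you feared.

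More importantly, the crux cannot be settled with the tools you use. Every ingredient of your argument has a verbatim analogue for a $\mathbb Z^2$-action and a thickened rational line: Lemma \ref{consequence_of_SCA_for_Z} applied to pieces of the slab, Lemma \ref{ItnIm=Itn+m}, right translation by the central elements $z^k$, and compactness of $\mathbbm X$. In that setting the analogous statement is false. For Ledrappier's shift the diagonal is an expansive direction. If a finite segment of its strip weakly coded the next longer segment, the argument in the proof of Theorem \ref{Z_never_expansive} would make $\mathbbm X$ finite. Hence any proof must exploit the non-commutativity of $H$, and yours never leaves the vertical axis.

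The missing idea, which is how the paper proceeds, is to step off the axis horizontally and let commutators produce vertical displacement. With $x=x_1$, $y=y_1$, apply Lemma \ref{consequence_of_SCA_for_Z} to the finite set $I^t_0\cdot\{x^{\pm1},y^{\pm1}\}$, which sticks out of the slab. This gives $n$ such that $I^t_n$ weakly codes that set. Since the sets $I_m$ are central, Remark \ref{coding_of_transformed_sets} and Lemma \ref{ItnIm=Itn+m} let you iterate, each horizontal step costing $n$ units of height: $I^t_{Mn}$ weakly codes $I^t_{(M-L)n}w$ for every word $w$ of length $L\leqslant M$ in $x^{\pm1},y^{\pm1}$. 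By $xy=zyx$, a word with $K$ letters $x$ and $K$ letters $y$ equals $y^Kx^Kz^j$, where $j$ is the number of pairs in which an $x$ precedes a $y$. Every $j\in\{0,\ldots,K^2\}$ occurs. Go out along such a word and come back along $x^{-K}y^{-K}$, or along $y^{-K}x^{-K}$ using $x^Ky^K=y^Kx^Kz^{K^2}$. This shows that a piece of height about $4Kn$ weakly codes all translates of the base by $z^j$, $|j|\leqslant K^2$, i.e.\ a piece of height about $K^2$. The horizontal cost is linear in $K$ while the vertical gain (the enclosed area) is quadratic, so choosing $K$ with $K^2$ sufficiently larger than $4Kn$ gives the required $N$.

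Two routine adjustments to the paper's write-up are needed. Its four-block words $y^kx^ly^{K-k}x^{K-l}$ only realize the exponents $l(K-k)$, which miss some $j\leqslant K^2$, so general words are needed. Also, heights in $H$ lie in $\frac12\mathbb Z$, so $I^t_0I_m=I^t_m$ can fail, and it is safer to run the argument from the base $I^t_1$.
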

	\begin{proof}
	        To slightly simplify the notation in this proof, take $x=x_1,y=y_1$. By Lemma \ref{consequence_of_SCA_for_Z}, there is $n$ such that $I^t_n$ weakly codes $I^t_0 \cdot \{x,x^{-1},y,y^{-1}\}$. Therefore, by Remark \ref{coding_of_transformed_sets} and Lemma \ref{ItnIm=Itn+m}, $I^t_{2n}=I^t_n\cdot I_n$ weakly codes $I^t_0\cdot \{x,x^{-1},y,y^{-1}\}\cdot I_n=I^t_n\cdot \{x,x^{-1},y,y^{-1}\}$, which, in turn, weakly codes $I^t_0\cdot \{x,x^{-1},y,y^{-1}\}\cdot \{x,x^{-1},y,y^{-1}\}$. Inductively, for all $M\in\mathbb N, L\in \{0,1,\ldots, M\}$ and $k_1,\ldots,k_m\in\mathbb Z \text{ such that } |k_1|+\ldots+|k_m|=L$,        
        \begin{equation}
        \label{ItKn_general_coding}
            I^t_{Mn} \text{ weakly codes } I^t_{(M-L)n}x^{k_1}y^{k_2}x^{k_3}\ldots y^{k_m}            .
        \end{equation}        
		In particular, for any $K\in\mathbb N$, the set $I^t_{4Kn}$ weakly codes $I^t_{2Kn}y^kx^ly^{K-k}x^{K-l}$ for any $k,l\in\{0,1,\ldots,K\}$. Using $xy=zyx$, we obtain $y^kx^ly^{K-k}x^{K-l}=y^Kx^Kz^{l(K-k)}$. Hence, for any $k\in\{0,1,\ldots,K^2\}$, the set $I^t_{4Kn}$ weakly codes $I^t_{2Kn}y^Kx^Kz^{k}$. Thus,        
        \begin{equation}
        \label{It4Kn_weakly codes_It2KyK...}
		      I^t_{4Kn}\text{ weakly codes } I^t_{2Kn}y^Kx^KI^+_{K^2},
		\end{equation}         
        which, by \eqref{ItKn_general_coding}, weakly codes 
        $$I_{(2K-2K)n}x^{-K}y^{-K}y^Kx^KI^+_{K^2}=I^t_0I^+_{K^2}.$$
        Therefore, $I^t_{4Kn}$ weakly codes $I^t_0I^+_{K^2}$. On the other hand,         
        \begin{equation*}
            y^Kx^KI^+_{K^2}=x^Ky^Kz^{-K^2}I^+_{K^2}=x^Ky^KI^-_{K^2},
        \end{equation*}
        so by \eqref{It4Kn_weakly codes_It2KyK...} and again \eqref{ItKn_general_coding}, $I^t_{4Kn}$ also weakly codes $I^t_0I^-_{K^2}$. Therefore, by Remark \ref{coding_of_transformed_sets} and Lemma \ref{ItnIm=Itn+m}
        \begin{equation*}
            I^t_{4Kn} \text{ weakly codes } I^t_0I^+_{K^2} \cup I^t_0I^-_{K^2} = I^t_{0}I_{K^2}=I^t_{K^2}.
        \end{equation*}
        Since $K$ was arbitrary, we can take $K\in\mathbb N$ such that $K^2\geqslant 4Kn+1$. Since $I^t_{4Kn}$ weakly codes $I^t_{K^2}$, in particular it weakly codes $I^t_{4Kn+1}$. Take $N=4Kn$.
	\end{proof}
	\begin{proof}[Proof of Theorem \ref{Z_never_expansive}]
		Assume that $\mc Z$ is expansive with expansive radius $t>0$. By Lemma \ref{OdcinekKodujeNadIPodSoba}, there is some $N$ such that $I^t_N$ weakly codes $I^t_{N+1}=I^t_N\cdot I_1$, which weakly codes $I^t_{N+1}\cdot I_1=I^t_{N+2}$. Inductively, $I^t_N$ weakly codes $I^t_{n}$ for arbitrary large $n$. Hence $I^t_N$ weakly codes $\mc Z^t$, because $\bigcup_{n\in\mathbb N}I^t_n=\mc Z^t\cap H$. Therefore $I^t_N$ weakly codes $\mc H$, so $\rho^{I^t_N}(\x,\y)\leqslant \eta$ implies $\x=\y$. This means that sets of the form $\{\y\in\mathbbm X:\sup_{g\in I^t_N}\rho(g\x,g\y) < \eta\}$ are singletons. But these sets are open, because $I^t_N$ is a finite subset of $H$, and $H$ acts on $\mathbbm X$ via homeomorphisms. Since $\mathbbm X$ is compact, it must be finite.
	\end{proof}
	
	For a vertical group $\mc G$, by $\mc G^t_e$ we denote the thickening of $\mc G$ by $t\geqslant 0$ with respect to the Euclidean metric $d_e$. We say that $\mc G$ is \emph{expansive with respect to $d_e$} if, for some $t\geqslant 0$, the thickening $\mc G^t_e$ codes $\mc H$.
	
	\begin{lemma}
		\label{For_vertical_euclidean_is_good}
		Let $\mc G$ be a vertical group. Then $\mc G$ is expansive with respect to $d$ if and only if it is expansive with respect to $d_e$.
	\end{lemma}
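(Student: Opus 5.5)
The plan is to compare the two notions through the Corollary following the last lemma of Section 4: a vertical group $\mc G$ is expansive (with respect to $d$) if and only if $\mc G^t$ codes $\mc H$ for some $t>0$. Expansiveness with respect to $d_e$ was defined by the same condition with $\mc G^t$ replaced by $\mc G^t_e$. So it suffices to show that the families $\{\mc G^t\}_{t>0}$ and $\{\mc G^t_e\}_{t>0}$ are mutually cofinal. That is, for each $t$ there should be $s$ with $\mc G^t\subset \mc G^s_e$, and conversely. Once this holds, a superset of a set coding $\mc H$ still codes $\mc H$, because the condition $\rho^{\tilde A g}\leqslant\eta$ for a larger set $\tilde A\supset A$ implies $\rho^{Ag}\leqslant\eta$. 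Hence each notion implies the other.

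For the cofinality I first reduce to $d=d_{CK}$. By Lemma \ref{Homog_norm_equiv_to_CK} there is $c$ with $\frac1c\|\cdot\|_{CK}\leqslant \|\cdot\|\leqslant c\|\cdot\|_{CK}$. Since both metrics are right invariant, $d(g,h)=\|gh^{-1}\|$, so $d$ and $d_{CK}$ are bi-Lipschitz equivalent. Consequently $\mc G^t\subset \mc G^{ct}_{CK}$ and $\mc G^t_{CK}\subset \mc G^{ct}$, where the subscript indicates the thickening taken with respect to $d_{CK}$. Next I apply Lemma \ref{Thickening_vertical_group}, which uses that $\mc G$ is vertical, so $\mc Z\subset\mc G$. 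It gives $d_{CK}(g,\mc G)=d_e(g,\mc G)$ for every $g$, and therefore $\mc G^t_{CK}=\mc G^t_e$ exactly. Chaining these inclusions gives $\mc G^t\subset \mc G^{ct}_e$ and $\mc G^t_e\subset\mc G^{ct}$, which is the required cofinality.

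There is one small point to check: the $d_e$-definition allows $t\geqslant 0$, while the $d$-definition uses $t>0$. This causes no difficulty, since $\mc G^0_e=\mc G\subset\mc G^1_e$ and enlarging the set preserves coding. I expect the only genuine content to be the identity $\mc G^t_{CK}=\mc G^t_e$, and that is already established in Lemma \ref{Thickening_vertical_group}. The remaining steps are bookkeeping through the Corollary.
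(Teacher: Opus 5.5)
Your proposal is correct and follows the paper's route. You reduce from $d$ to $d_{CK}$ via the equivalence of homogeneous norms, which is the content of Proposition \ref{Expan_in_d_iff_expan_in_C-K}, and then use Lemma \ref{Thickening_vertical_group} to identify the $d_{CK}$- and $d_e$-thickenings of a vertical group. You also spell out two points the paper leaves implicit: the Corollary is what reconciles Definition \ref{def_expansive} with the coding-based definition of expansiveness for $d_e$, and the $t\geqslant 0$ versus $t>0$ discrepancy is harmless.
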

	\begin{proof}
		By Proposition \ref{Expan_in_d_iff_expan_in_C-K}, all we need to show is that $\mc G$ is expansive with respect to $d_e$ if and only if it is expansive with respect to $d_{CK}$. But that follows directly from Lemma \ref{Thickening_vertical_group}
	\end{proof}

    From now on, we aim to mimic the techniques presented in \cite{BL}. The main novelty is that we use the infinite versions of Boyle's and Lind's rectangles $V^t(r)$, namely, we take the products of these rectangles with the vertical axis. That affects the proof of Lemma \ref{BL_Lemma3.2} (a replacement for \cite[Lemma 3.2]{BL}), where we cannot use the compactness argument directly, but we need to bound our `rectangles' first, use the argument, and then return to the unbounded versions. In this way, the reasoning is reduced to the Euclidean geometry arguments. The proofs of the rest of the results strongly resemble those presented in \cite{BL}, but we provide them for completeness.
	
	For a linear subspace $V< \mathbb R^{2D}$, let $\pi_V$ be the projection onto $V$, that is, $\pi_V((v,u))$ is the orthogonal projection of $v$ onto $V$ in the space $\mathbb R^{2D}$. For any $r\geqslant 0$ we define $\mc G^t_e(r) = \{g\in\mc G^t_e: |\pi_{V}(g)|\leqslant r\}$, where $V=\pi(\mc G)$ (i.e., $\mc G = V\times \mathbb R$). In the language of \cite{BL}, when $\mc G= V\times \mathbb R$, we have $\mc G^t_e(r) = V^t(r)\times \mathbb R$, where $V^t(r) = \{v\in \mathbb R^{2D}: d_e(v,V) \leqslant t \text{ and } d_e(v, V^{\perp}) \leqslant r\}$. Equivalently, $V^t(r) = \pi(\mc G_e^t(r))$. We have
        \begin{equation}
        \label{eq:G^t_1_e(r_1)G^t_2_e(r_2)=G^t_1+t_2_e(r_1 + r_2)}
            \mc G^{t_1}_e(r_1)\cdot \mc G^{t_2}_e(r_2) = \mc G^{t_1 + t_2}_e(r_1 + r_2)
        \end{equation} 
    
    \begin{remark}
    \label{Rem:G_e^t(r)_contain_Z}
        Since $\mc G_e^t(r)\cdot\mc Z = \mc G_e^t(r)$, for any $v\in\mathbb R^{2D}$ and $u,\tilde u\in\mathbb R$ we have
        \begin{equation}
        \label{eq:G_e^t(r)(v,u)=G_e^t(r)(v,0)}
            \mc G_e^t(r)(v,u) = \mc G_e^t(r)(v,\tilde u) = \mc G_e^t(r)(v,0).
        \end{equation}
    \end{remark}
	
	\begin{lemma}
		\label{BL_Lemma3.2} 
		If a vertical subgroup $\mc G$ is expansive, then there is $t>0$ with the property that for every $s>0$ there is $r>0$ such that $\mc G^t_e(r)$ codes $\mc G^s_e(0)$. Hence $\mc G^t_e(r+a)$ codes $\mc G^s_e(a)$ for all $a>0$.
	\end{lemma}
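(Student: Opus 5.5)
Since $\mc G$ is expansive, the Corollary after the coding lemmas together with Lemma \ref{For_vertical_euclidean_is_good} gives $t>0$ such that $\mc G^t_e$ codes $\mc H$. So for every $g\in\mc H$, $\rho^{\mc G^t_e g}(\x,\y)\leqslant\eta$ forces $\x=\y$. This is the $t$ in the statement. Fix $s>0$. We want $r$ with $\mc G^t_e(r)$ coding $\mc G^s_e(0)$. The set $\mc G^s_e(0)=V^s(0)\times\mathbb R$ is unbounded, so we cannot apply Lemma \ref{SCA} to it directly. Following the remark before the lemma, we first pass to bounded pieces and then go back to the unbounded sets.

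\textbf{Reduction to a bounded target.} By Remark \ref{Rem:G_e^t(r)_contain_Z}, a translate $\mc G^t_e(r)g$ depends only on $\pi(g)$, and $\mc G^s_e(0)\mathcal Z=\mc G^s_e(0)$. Let $K=\{(v,0):v\in V^s(0)\}$, which is a bounded set. Then $\mc G^s_e(0)=K\mc Z$. Also $\mc G^t_e(r)\mc Z=\mc G^t_e(r)$, so Remark \ref{coding_of_transformed_sets} (union over right translates by $\mc Z$) reduces the claim to this: for all $g\in\mc H$, $\rho^{\mc G^t_e(r)g}(\x,\y)\leqslant\eta$ implies $\rho^{Kg}(\x,\y)\leqslant\eta$. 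Each element of $K$ has the form $(w,0)$ with $w\in V^\perp$, $|w|\le s$. Since the action is continuous and $Kg\cap H$ is uniformly finite by Proposition \ref{Bounded_are_finite}, there is a single $\varepsilon>0$ such that closeness $\rho^{Fg}(\x,\y)\le\varepsilon$ on a fixed bounded neighbourhood $F$ of $K$ yields $\rho^{Kg}\le\eta$. Here we use Lemma \ref{BL_Lemma2.3}-type enlargement to absorb the discretisation error $\lambda$ from Corollary \ref{d(g,H)<=lambda}.

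\textbf{Compactness step.} Suppose no $r$ works. Then for each $n$ there are $g_n\in\mc H$ and $\x_n,\y_n$ with $\rho^{\mc G^t_e(n)g_n}(\x_n,\y_n)\leqslant\eta$ but $\rho^{Kg_n}(\x_n,\y_n)>\eta$. By Remark \ref{Rem:G_e^t(r)_contain_Z} we may assume $g_n=(v_n,0)$. Write $v_n=a_n+b_n$ with $a_n\in V$ and $b_n\in V^\perp$. Right translating by an element of $\mc G$ does not change $\mc G^t_e(\infty)$ but shifts the cutoff, so we first choose a lattice point $h_n\in H$ within $\lambda$ of $g_n$ and replace $(\x_n,\y_n)$ by $(h_n\x_n,h_n\y_n)$. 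This is where Euclidean geometry is needed. After this replacement, the $\pi$-image of $\mc G^t_e(n)h_n^{-1}g_n$ still contains $V^{t'}(n-c)$ for some $t'$ slightly smaller than $t$, and $h_n^{-1}g_n$ stays in a fixed bounded set. To avoid the loss $t'<t$, we pick $t$ from expansiveness with some margin, i.e. we use $\mc G^{t_0}_e$ coding $\mc H$ and take $t=t_0+\lambda+1$.

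Next we pass to convergent subsequences: $h_n\x_n\to\x$, $h_n\y_n\to\y$, and $h_n^{-1}g_n\to g$. For each fixed $m$, the finitely many lattice points of $\mc G^{t_0}_e(m)g$ lie in $\mc G^t_e(n)h_n^{-1}g_n$ for large $n$. By continuity this gives $\rho^{\mc G^{t_0}_e g}(\x,\y)\leqslant\eta$, as in the proof of Lemma \ref{SCA}, hence $\x=\y$. But $\rho^{F g}(h_n\x_n,h_n\y_n)>\varepsilon$ passes to the limit for the finite set $Fg\cap H$, which contradicts $\x=\y$. I expect the main obstacle to be this bookkeeping: controlling how the cutoffs $|\pi_V|\le r$ and the vertical (central) coordinates move under the discrete recentring by $h_n$. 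The vertical coordinate is harmless because $\mc G$ contains $\mc Z$, and the horizontal error is at most $\lambda$.

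\textbf{Final claim.} The statement that $\mc G^t_e(r+a)$ codes $\mc G^s_e(a)$ follows from \eqref{eq:G^t_1_e(r_1)G^t_2_e(r_2)=G^t_1+t_2_e(r_1 + r_2)}-type decompositions. We have $\mc G^s_e(a)=\bigcup_{w\in V,|w|\le a}\mc G^s_e(0)(w,0)$. Each piece is coded by $\mc G^t_e(r)(w,0)\subset\mc G^t_e(r+a)$. By Remark \ref{coding_of_transformed_sets} the union is then coded, and coding is monotone in the coding set.
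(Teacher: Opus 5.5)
Your argument is sound in outline, but it is organised differently from the paper's.

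\textbf{How the two routes differ.} The paper proceeds in four steps:
\begin{enumerate}
\item It fixes $\varepsilon$ by continuity, so that $\rho(\x,\y)\leqslant\varepsilon$ forces $\rho^{B_e(C)}(\x,\y)\leqslant\eta$ with $C=s+D+1$.
\item It applies Lemma~\ref{SCA} at the identity to the bounded exhaustion $\mc G^{t_0}_e(r)\cap B_e(r)$.
\item It transports this to an arbitrary $g=(v,u)$ through the lattice point $\tilde g=(\tilde v,0)$ with $|v-\tilde v|<D$; this is why $t=t_0+D$.
\item Only at the end does it recover the central direction, by weak coding under $Z$-translates and the inclusion $B_e(s)\mc Z g\subset B_e(C)Z\tilde g$.
\end{enumerate}
You remove the central direction first. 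Full coding together with $\mc G^t_e(r)\mc Z=\mc G^t_e(r)$ reduces the target to the compact slice $K$, and this step is correct. You then run one compactness argument in $(h_n\x_n,h_n\y_n,k_n)$, which amounts to reproving Lemma~\ref{SCA} in place with an extra compact parameter for the recentring. In exchange, you need no explicit $\varepsilon$ and no covering of $B_e(s)\mc Z g$. Also, any $t>t_0$ suffices, because the discretisation error is absorbed by $k_n\to k$, where $k$ is the limit you call $g$. You correctly use that $\mc G^{t_0}_e$ \emph{codes} $\mc H$, not merely weakly codes it; this is needed because $k$ need not lie in $H$.

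\textbf{Three steps need repair.} None of them affects the strategy.
\begin{enumerate}
\item The reduction ``we may assume $g_n=(v_n,0)$'' is invalid. Only the coding set is $\mc Z$-invariant, while $K(v_n,u_n)=K(v_n,0)(0,u_n)$ genuinely depends on $u_n$, and your reduction requires the claim for all $g\in\mc H$. Drop this sentence: recentring by $h_n\in H$ with $d(g_n,h_n)\leqslant\lambda$ already handles the central coordinate.
\item The recentred element must be $k_n=g_nh_n^{-1}$, not $h_n^{-1}g_n$. Indeed $\rho^{Ag_n}(\x_n,\y_n)=\rho^{Ak_n}(h_n\x_n,h_n\y_n)$, and right invariance gives $\|k_n\|=d(g_n,h_n)\leqslant\lambda$. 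By contrast, $h_n^{-1}g_n=h_n^{-1}k_nh_n$ has its central coordinate shifted by $\omega(\pi(k_n),\pi(h_n))$, which is in general unbounded. For the coding set the two translates agree by Lemma~\ref{AZ=Z_implies_Ag=gA}, but for $K$ they do not.
\item The set $\mc G^{t_0}_e(m)k=(V^{t_0}(m)+\pi(k))\times\mathbb R$ contains infinitely many lattice points, so your finiteness claim there is false. It is also unnecessary. Every lattice point $f\in\mc G^{t_0}_e k$ lies in $\mc G^t_e(n)k_n$ for all large $n$, since $t>t_0$ and $\pi(k_n)\to\pi(k)$. Continuity of the single map $f$ then gives $\rho(f\x,f\y)\leqslant\eta$.
\end{enumerate}
Finiteness is needed only on the target side, and there it holds. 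For every $n$ we have $Kk_n\subset B(\max_{w\in K}\|w\|+\lambda)$, a fixed ball with finitely many lattice points. Hence one fixed $f$ satisfies $\rho(fh_n\x_n,fh_n\y_n)>\eta$ along a subsequence, which gives $\rho(f\x,f\y)\geqslant\eta$ and contradicts $\x=\y$. This also makes your $\varepsilon$ and $F$ superfluous.

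Your last paragraph, deducing that $\mc G^t_e(r+a)$ codes $\mc G^s_e(a)$, matches the paper.
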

	
	\begin{proof}
		Let us recall that $\eta$ is the expansive constant and that $Z = \langle \{z\} \rangle$. Let $\mc G$ be expansive with expansive radius $t_0>0$. Take $t=t_0+D$ and fix $s>0$. Because the discrete Heisenberg group $H$ acts by homeomorphisms, we can find $\varepsilon>0$ such that $\rho(\x,\y)\leqslant\varepsilon$ implies $\rho^{B_e(C)}(\x,\y)\leqslant \eta$, where $B_e(C)$ is the closed ball with respect to the Euclidean metric $d_e$, centered at $\mathbf 0$ and with radius $C=s+D+1$. Notice that such $C$ guarantees the inclusion $B_e(s+D) \mc Z \subset B_e(C) Z$. To see this, first observe that $B_e(s+D) \mc Z =  \{v \in \mathbb{R}^{2D}: |v| \leqslant s+D\}\times \mathbb{R}$. Hence for each $(v,u)\in B_e(s+D) \mc Z$ we have $d_e((v,u), (0,[u])) \leqslant |v| + |u-[u]|\leqslant s+D+1$, where $[u]$ denotes the integer part of $u$.
		
		We need ``bounded versions'' of $\mc G_e^{t_0}(r)$ sets, namely ${\mc{B}_e^{t_0}(r)} = \mc G^{t_0}_e(r)\cap B_e(r)$. These sets are bounded and $\bigcup_{r>0}{\mc{B}_e^{t_0}(r)} = \mc G^{t_0}_e$, so by Lemma \ref{SCA}, there exists $r>D$ such that $\rho^{{\mc{B}_e^{t_0}(r-D)}}(\x,\y)\leqslant\eta$ implies $\rho(\x,\y)\leqslant \varepsilon$. Take such $r$. 
		
		Fix $g = (v,u) \in \mc H$ and $\x,\y\in\mathbbm X$ such that $\rho^{\mc G_e^t(r)g}(\x,\y)\leqslant \eta$. Let $\tilde v\in\mathbb Z^{2D}$ be such that $|v-\tilde v|<D$ (note that there exists $\tilde v\in\mathbb Z^{2D}$ such that $|v-\tilde v|\leqslant\frac{\sqrt{2D}}{2}$) and define $\tilde g = (\tilde v, 0)\in H$. Using \eqref{eq:G_e^t(r)(v,u)=G_e^t(r)(v,0)}, we have $\mc G_e^{t_0}(r-D)\tilde g \subset \mc G_e^t(r)g$. Since ${\mc{B}_e^{t_0}(r-D)}\subset \mc G_e^{t_0}(r-D)$, we have $\rho^{{\mc{B}_e^{t_0}(r-D)\tilde g}}(\x,\y) = \rho^{{\mc{B}_e^{t_0}(r-D)}}(\tilde g\x,\tilde g\y) \leqslant \eta$, and hence $\rho(\tilde g\x, \tilde g\y)\leqslant\varepsilon$. By earlier assumptions, we have $\rho^{B_e(C)}(\tilde g\x, \tilde g\y) = \rho^{B_e(C)\tilde g}(\x, \y)\leqslant \eta$. Hence $\mc G_e^t(r)g$ codes $B_e(C)\tilde g$, and so, by Remark \ref{coding_of_transformed_sets}, $\mc G_e^t(r)g = \mc G_e^t(r)Zg = \mc G_e^t(r)gZ$ codes $B_e(C)\tilde gZ = B_e(C)Z\tilde g$. But 
		\begin{equation*}
			B_e(C)Z\tilde g \supset B_e(s+D)\mc Z\tilde g \supset B_e(s)\mc Z g,
		\end{equation*}
		hence $\mc G_e^t(r)g$ codes $B_e(s)\mc Z g$, which contains $\mc G_e^s(0)g$. Since $g$ was arbitrary, $\mc G_e^t(r)$ codes $\mc G_e^s(0)$. Finally, by \ref{eq:G^t_1_e(r_1)G^t_2_e(r_2)=G^t_1+t_2_e(r_1 + r_2)} and Remark \ref{coding_of_transformed_sets}, $\mc G_e^t(r+a) = \mc G_e^t(r) \cdot \mc G_e^0(a)$ codes $\mc G_e^s(0)\cdot \mc G_e^0(a) = \mc G_e^s(a)$.
	\end{proof}	

    \begin{lemma}
	\label{BL_Lemma3.3} 
        If $\mc G^t_e(r)$ codes $\mc G^{t+\varepsilon}_e(0)$ for some $t,r,\varepsilon >0$, then $\mc G$ is expansive.
    \end{lemma}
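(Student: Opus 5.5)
The plan is to follow the proof of \cite[Lemma 3.3]{BL}: from the hypothesis, build an increasing family of sets coded by $\mc G^t_e(r)$ that exhausts $\mc G^{t+\varepsilon}_e$, and show they are all coded by one fixed set. Write $\mc G = V\times\mathbb R$. The hypothesis says $\mc G^t_e(r)g$ weakly codes $\mc G^{t+\varepsilon}_e(0)g$ for every $g\in\mc H$. By Remark \ref{Rem:G_e^t(r)_contain_Z} we may take $g=(w,0)$ with $w\in V$, which translates the rectangles inside $\mc G$ along $V$.

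\textbf{Step 1: grow the slab step by step.} By \eqref{eq:G^t_1_e(r_1)G^t_2_e(r_2)=G^t_1+t_2_e(r_1 + r_2)} and Remark \ref{coding_of_transformed_sets}, multiplying on the right by $\mc G_e^0(a)$ shows that $\mc G^t_e(r+a)$ codes $\mc G^{t+\varepsilon}_e(a)$ for every $a\geqslant 0$. This set contains $\mc G^t_e(a)$, whose transversal extent is larger by $\varepsilon$. Because of the $\mathbb R$ factor, the sets $\mc G^t_e(r)$ are unbounded, so I would work with the full $2D$-dimensional rectangle $V^t(r)$ in the $\pi$-coordinates. The key point is that $\mc G^{t}_e(r)$ sits inside $\mc G^{t+\varepsilon}_e(r)$, and the outer layer of transversal width $\varepsilon$ can be covered by translates $\mc G^t_e(r)(w,0)$ with $w$ ranging over $V$-directions. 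The difficulty is that we need to increase the transversal radius $t$, not the $V$-radius. So I would swap roles, as in \cite{BL}: a rectangle that is long in $V$ codes a fatter rectangle, and we iterate fattening in the $V^\perp$ direction.

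Concretely, I would argue by induction that $\mc G^t_e(r+nR)$ codes $\mc G^{t+n\varepsilon}_e(R')$ for suitable constants. At each step, the set already coded contains translates $\mc G^t_e(r)h$ for $h=(w,0)$ with $w\in V^\perp$ and $|w|\leqslant n\varepsilon$. Each such translate codes $\mc G^{t+\varepsilon}_e(0)h$, which lies in the thickening of width $t+(n+1)\varepsilon$. This is pure Euclidean geometry in $\mathbb R^{2D}$, since everything is a product with the vertical axis.

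\textbf{Step 2: conclude expansiveness.} The union over $n$ of the coded sets is $\mc H$ (in fact $V^{\perp}$-thickenings of all widths). Hence some fixed unbounded set $\mc G^{t}_e(\rho_0)$ weakly codes $\mc H$. But $\mc G^t_e(\rho_0)\subset \mc G^{t'}_e$ for $t'=\max(t,\rho_0)$, so $\mc G^{t'}_e$ codes $\mc H$ and $\mc G$ is expansive with respect to $d_e$. Lemma \ref{For_vertical_euclidean_is_good} then transfers this to $d$.

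\textbf{Main obstacle.} The hard part is Step 1: the bookkeeping that the fattened set at stage $n$ is genuinely covered by translates of $\mc G^t_e(r)$ lying inside the set already coded. The rectangle's $V$-extent must not shrink, or must shrink only by a controlled amount, and right translations must respect the commutation used in Lemma \ref{AZ=Z_implies_Ag=gA}. I expect this to work because all sets involved satisfy $A\mc Z=A$, which reduces everything to the Euclidean computation on $\pi$-images from \cite{BL}.
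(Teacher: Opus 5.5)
Your first move in Step 1 is exactly how the paper starts: right-multiplying by $\mc G^0_e(a)$ shows that $\mc G^t_e(r+a)$ codes $\mc G^{t+\varepsilon}_e(a)$ for all $a$. The argument then breaks in Step 2. The assertion that some single set $\mc G^t_e(\rho_0)$ weakly codes $\mc H$ does not follow from your induction. There the coding sets $\mc G^t_e(r+nR)$ grow with $n$, because each fattening step uses up about $r$ of $V$-extent; this is the bookkeeping you flag and leave open. Also, with $R'$ fixed, the union over $n$ of the coded sets $\mc G^{t+n\varepsilon}_e(R')$ is only $\{g\in\mc H:|\pi_V(g)|\leqslant R'\}$, not $\mc H$.

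In fact the assertion cannot hold when $\mathbbm X$ is infinite. Since $V^t(\rho_0)$ is bounded, $\mc G^t_e(\rho_0)$ lies in the Euclidean thickening $\mc Z^c_e$ with $c=\sqrt{t^2+\rho_0^2}$. By Lemma \ref{Thickening_vertical_group}, this is the $d_{CK}$-thickening of $\mc Z$. Weak coding of $\mc H$ by it would therefore make $\mc Z$ expansive, contradicting Theorem \ref{Z_never_expansive}. Unboundedness in the vertical direction is not what is needed; the coding set must be unbounded along $V$. Separately, your first suggestion of covering the outer transversal layer by translates along $V$-directions cannot work, since such translates do not change the distance to $V$.

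The missing ingredient is the union clause of Remark \ref{coding_of_transformed_sets}, used to pass to full slabs immediately, as the paper does. Since $\bigcup_{a>0}\mc G^t_e(r+a)=\mc G^t_e$ and $\bigcup_{a>0}\mc G^{t+\varepsilon}_e(a)=\mc G^{t+\varepsilon}_e$, your first step already gives that $\mc G^t_e$ codes $\mc G^{t+\varepsilon}_e$. As $\mc G^{t+\varepsilon}_e=\mc G^t_e\cdot\mc G^\varepsilon_e$, right multiplication by $\mc G^\varepsilon_e$ shows that $\mc G^{t+\varepsilon}_e$ codes $\mc G^{t+2\varepsilon}_e$. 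By transitivity of coding, $\mc G^t_e$ codes $\mc G^{t+n\varepsilon}_e$ for every $n$, hence codes $\mc H$. Slabs have infinite extent along $V$, so nothing is lost in fattening, and no Euclidean bookkeeping or compactness is needed.

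Your bounded induction can also be made precise: for all $n$ and $a$, $\mc G^t_e(nr+a)$ codes $\mc G^{t+n\varepsilon}_e(a)$. This follows by applying the base case with $\mc G^{n\varepsilon}_e(0)$ on the right, then transitivity. But the conclusion must then come from the union over $n$ and $a$, which yields the full slab $\mc G^t_e$ as the coding set, not a fixed rectangle.
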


    \begin{proof}
        If $\mc G_e^t(r)$ codes $\mc G_e^{t+\varepsilon}(0)$, then by \eqref{eq:G^t_1_e(r_1)G^t_2_e(r_2)=G^t_1+t_2_e(r_1 + r_2)} and Remark \ref{coding_of_transformed_sets}, $\mc G_e^t(r+a) = G_e^t(r)\cdot G_e^0(a)$ codes $\mc G_e^{t+\varepsilon}(0)\cdot\mc G_e^0(a) = \mc G_e^{t+\varepsilon}(a)$, for every $a>0$. Since $\mc G_e^s = \bigcup_{a>0}\mc G_e^s(a)$ for any $s$,  $\mc G_e^t $ codes $\mc G_e^{t+\varepsilon}$, again by Remark \ref{coding_of_transformed_sets}. On the other hand, $\mc G_e^{t+\varepsilon} = \mc G_e^{t}\cdot \mc G_e^\varepsilon$, so it codes $\mc G_e^{t+\varepsilon}\cdot \mc G_e^\varepsilon = \mc G_e^{t+2\varepsilon}$. Inductively, $\mc G_e^{t}$ codes $\bigcup_{a>0} \mc G_e^{t+a} = \mc H$.
    \end{proof}

    We call a vertical group \emph{$k$-dimensional} if it has dimension $k$ as a linear space, i.e., it has the form $V\times \mathbb R$, where $V$ has dimension $k-1$. Let $\mathbb G_{k}$ be the Grassmann manifold of $k$-dimensional linear subspaces of $\mathbb R^{2D}$.
    
	\begin{definition}
		For $k\leqslant 2D-1$ we define:
		\begin{itemize}
			\item $\mathbb E_k = \{V\in \mathbb G_k: \mc G = V\times \mathbb R\textrm{ is expansive}\}$;
			\item $\mathbb N_k = \{ V \in \mathbb G_k: \mc G = V \times \mathbb R \textrm{ is nonexpansive}\}$.            
		\end{itemize}
	\end{definition}
	
	\begin{lemma}
		\label{Just_like_BL}\,
		\begin{enumerate}
			\item \label{BL_Lemma3.4} Let $V \in \mathbb E_k$. Then there are $r = r_{V} , t = t_{V} $, and a neighborhood $\standardcal N_{V}$ of $V$ in $\mathbb G_k$ such that, for every $W\in \standardcal N_{V}$, for $\mc F = W \times \mathbb R$ we have that $\mc F_e^t(r)$ codes $\mc F^{t+1}_e(0)$. Hence $\standardcal N_{V} \subset \mathbb E_k$, so $\mathbb E_k$ is open in $\mathbb G_k$.
			\item \label{BL_Lemma3.5} Let $\standardcal K$ be a compact subset of $\mathbb E_k$. Then there are $r_{\standardcal K} > 0$ and $t_{\standardcal K} > 0$ such that $\mc G^t_e(r)$ codes $\mc G^{t+1}_e(0)$ for every $\mc G = V \times \mathbb R$ such that $V \in \standardcal K$.
		\end{enumerate}
	\end{lemma}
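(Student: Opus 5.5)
The plan follows Boyle--Lind's Lemmas 3.4 and 3.5, reducing everything to Euclidean geometry in $\mathbb R^{2D}$ via the sets $\mc G^t_e(r)=V^t(r)\times\mathbb R$.

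For part (\ref{BL_Lemma3.4}), let $V\in\mathbb E_k$ and $\mc G=V\times\mathbb R$. First apply Lemma \ref{BL_Lemma3.2} to get $t>0$ such that for $s=t+2$ there is $r_0$ with $\mc G^t_e(r_0)$ coding $\mc G^{t+2}_e(0)$. The key step is a geometric inclusion for $W$ close to $V$. With $\mc F=W\times\mathbb R$, I want
\[
\mc F^{t-?}_e(r)\supset \mc G^t_e(r_0)\qquad\text{and}\qquad \mc G^{t+2}_e(0)\supset \mc F^{t+1}_e(0)\cap(\text{a suitable piece}).
\]
To make the thickenings match exactly, I will follow BL: choose $t_V=t+1$ and $r_V=r_0+1$, and use a neighborhood $\standardcal N_V$ so small that, inside the ball of radius $r_0+t+3$, the unit spheres of $V$ and $W$ are within Hausdorff distance small enough. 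Then:
\begin{itemize}
\item[(a)] $V^{t}(r_0)\subset W^{t+1}(r_0+1)$, hence $\mc G^t_e(r_0)\subset\mc F^{t+1}_e(r_0+1)$;
\item[(b)] $W^{t+2}(0)\cap B\subset V^{t+2}(0)$ up to a correction, where $B$ is a ball.
\end{itemize}
Here (b) is the delicate part. Since the slab $W^{t+2}(0)$ is not contained in $V^{t+2}(0)$, I will instead cover $\mc F^{t+2}_e(0)=W^{t+2}(0)\times\mathbb R$ only near $W^\perp$-direction $0$. That is, $W^{t+2}(0)$ is the ball of radius $t+2$ in $W^\perp$, so it is bounded in $\mathbb R^{2D}$. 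Hence for $W$ close to $V$ it lies in $V^{t+2+\delta}(\delta)$.

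With this corrected version, the chain is as follows. The set $\mc F^{t+1}_e(r_0+1)$ contains $\mc G^t_e(r_0)$, which codes $\mc G^{t+2}_e(0)$. By Lemma \ref{BL_Lemma3.2}, after enlarging $r$ by a fixed amount $a$ as in its second statement, the same holds with $\mc G^{t+2}_e(a)$. This last set contains $\mc F^{t+2}_e(0)\supset \mc F^{t_V+1}_e(0)$ for $W$ near $V$. Invariance of coding under supersets, together with right translation in Remark \ref{coding_of_transformed_sets}, then gives that $\mc F^{t_V}_e(r_V)$ codes $\mc F^{t_V+1}_e(0)$. Lemma \ref{BL_Lemma3.3} then yields $W\in\mathbb E_k$, so $\mathbb E_k$ is open.

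The main obstacle is the bookkeeping of all the Euclidean inclusions uniformly in $W$. Because every set is a product with $\mathbb R$ and vertical groups are normal (Remark \ref{Rem:G_e^t(r)_contain_Z}), the $u$-coordinate plays no role. The inclusions are purely statements about bounded pieces of slabs in $\mathbb R^{2D}$, and they follow from continuity of $W\mapsto\pi_W$ in operator norm on $\mathbb G_k$.

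Part (\ref{BL_Lemma3.5}) is then standard compactness. Cover $\standardcal K$ by the neighborhoods $\standardcal N_V$ from (\ref{BL_Lemma3.4}) and extract a finite subcover $\standardcal N_{V_1},\dots,\standardcal N_{V_m}$. Take $t_{\standardcal K}=\max t_{V_i}$ and $r_{\standardcal K}=\max r_{V_i}+(t_{\standardcal K}-\min t_{V_i})$. Monotonicity in $r$ is immediate from set inclusion. To raise $t$ from $t_{V_i}$ to $t_{\standardcal K}$, I would right-multiply by $\mc F^{c}_e(0)$ using \eqref{eq:G^t_1_e(r_1)G^t_2_e(r_2)=G^t_1+t_2_e(r_1 + r_2)} and Remark \ref{coding_of_transformed_sets}. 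Then $\mc F^{t_{V_i}}_e(r)\mc F^{c}_e(0)=\mc F^{t_{V_i}+c}_e(r)$ codes $\mc F^{t_{V_i}+1+c}_e(0)$, which gives the uniform statement.
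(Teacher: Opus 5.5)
Your proof is correct and follows the same route as the paper, which is Boyle--Lind's argument: get a coding from Lemma \ref{BL_Lemma3.2}, use two Euclidean inclusions for $W$ close to $V$, conclude with Lemma \ref{BL_Lemma3.3}, and get part (2) from a finite subcover plus monotonicity in $t$ and $r$ (the paper's inclusions are $V^{t_0}(r_0)\subset W^{t}(r)$ and $W^{t+1}(0)\subset V^{t_0+3}(1)$). Only the constants need tidying: inclusion (a) must be applied to $\mc G^t_e(r_0+a)$, so take $a<1$ or set $r_V=r_0+a+1$, and you need $W^{t+2}(0)\subset V^{t+2}(a)$ rather than $V^{t+2+\delta}(\delta)$ --- this holds because $\pi_{V^\perp}$ does not increase norms, while the paper avoids the issue by coding $\mc G^{t_0+3}_e(1)$, which leaves slack in the thickness.
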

	\begin{proof}		
		(\ref{BL_Lemma3.4})  Let $\mc G = V\times \mathbb R$. By Lemma \ref{BL_Lemma3.2}, there are $t_0 >0$ and $r_0>0$ such that $\mc G_e^{t_0}(r_0)$ codes $\mc G^{t_0 + 3}(1)$. Take $t = t_0 + 1$ and $r = r_0 + 1$. For a $k$-dimensional subspace $W<\mathbb R^{2D}$, if $W$ is sufficiently close to $V$, it satisfies the inclusions $V^{t_0}(r_0) \subset W^t(r)$ and $W^{t+1}(0) \subset V^{t_0+3}(1)$. For each such $W$ we take $\mc F = W\times \mathbb R$. The appropriate inclusions hold, hence $\mc F_e^t(r)$ codes $\mc F_e^{t+1}(0)$.
		
		(\ref{BL_Lemma3.5}) For each $V \in \standardcal K$ take $\standardcal N_{V}$, $t_{V}$ and $r_{V}$ as in \ref{BL_Lemma3.4}. Since $\standardcal K$ is compact, it has a finite cover $\{\standardcal N _{V_1}, \ldots , \standardcal N_{V_l}\}$. Let $\mc G_i = V_i \times \mathbb R$. Notice that if $\mc G_e^{t_0}(r_0)$ codes $\mc G^{t_0+1}(0)$, then for every $t>t_0$ and $r>r_0$, $\mc G_e^{t}(r)$ codes $\mc G^{t+1}(0)$. Hence, we can take $t_{\standardcal K} = \max\{t_{V_1}, \ldots, t_{V_l}\}$ and $r_{\standardcal K} = \max\{r_{V_1}, \ldots, r_{V_l}\}$.
	\end{proof}
	
	\begin{theorem}
		\label{BL_Thm3.6} 
		If $\mc G = V \times \mathbb R$ is a nonexpansive vertical group of dimension $k\leqslant 2D-1$, then there is a $2D$-dimensional nonexpansive vertical group containing $\mc G$.
	\end{theorem}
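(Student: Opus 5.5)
We follow \cite[Theorem 3.6]{BL}: we argue by contradiction, using the compactness statement of Lemma \ref{Just_like_BL}(\ref{BL_Lemma3.5}). Note a dimension shift. In the definition of $\mathbb E_k,\mathbb N_k$ the index refers to $V\in\mathbb G_k$, so a ``$2D$-dimensional'' vertical group is $W\times\mathbb R$ with $W\in\mathbb G_{2D-1}$, a hyperplane in $\mathbb R^{2D}$. Let $\standardcal K=\{W\in\mathbb G_{2D-1}: V\subset W\}$. This is a closed subset of the Grassmannian, hence compact; it is identified with a projective space of $V^\perp$. Suppose every $W\in\standardcal K$ lies in $\mathbb E_{2D-1}$. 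Then Lemma \ref{Just_like_BL}(\ref{BL_Lemma3.5}) gives uniform $t,r>0$ such that for every $\mc F=W\times\mathbb R$ with $W\in\standardcal K$, the set $\mc F^t_e(r)$ codes $\mc F^{t+1}_e(0)$.

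The goal is to show that $\mc G=V\times\mathbb R$ is then expansive, contradicting the hypothesis. By Lemma \ref{BL_Lemma3.3}, it suffices to find $t',r',\varepsilon$ such that $\mc G^{t'}_e(r')$ codes $\mc G^{t'+\varepsilon}_e(0)$. Since $\mc Z$ factors out of everything in sight (Remark \ref{Rem:G_e^t(r)_contain_Z}), everything reduces to Euclidean geometry of the sets $V^t(r)$, $W^t(r)$ in $\mathbb R^{2D}$. Coding is invariant under right translation by any $g\in\mc H$, and $W^t(r)\times\mathbb R$ translated by $(w,0)$ is $(W^t(r)+w)\times\mathbb R$. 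So we may work with Euclidean translates exactly as in \cite{BL}.

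The geometric step, which I expect to be the main obstacle, is the following. Fix a large $R$ and consider the set $S=\{v: d_e(v,V)\le R\}$, a tube around $V$. For every unit vector $n\in V^\perp$ let $W_n=n^\perp\in\standardcal K$. The slab $W_n^{t+1}(0)$ is the slab of width $t+1$ about $W_n$ and is unbounded in the directions of $W_n\cap V^\perp$. The set $W_n^t(r)$ is contained in a bounded neighbourhood of $V$ (within distance about $r+t$). We then run a ``peeling'' argument. Let $T_R=V^R(\cdot)$ be the tube of radius $R$ (with the $V$-component unrestricted, i.e. $\mc G^R_e$). For each boundary direction $n$, place a translate of $W_n^t(r)$ inside $T_R$ touching the boundary in direction $n$. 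Its coded set $W_n^{t+1}(0)$, suitably translated, protrudes outside $T_R$ by roughly $1$ near that boundary point. Taking unions over all directions $n$ and all translates along $V$ (Remark \ref{coding_of_transformed_sets}), we get that $\mc G^R_e$ codes a set containing $\mc G^{R+\delta}_e$ intersected with nothing worse, for some fixed $\delta>0$ independent of $R$. The care needed is this. A slab in direction $n$ is thin, of width $t$ normal to $W_n$, yet it must contain the full $r$-rectangle. Hence $R$ must be chosen large relative to $r,t$, so that the curvature of the sphere of radius $R$ in $V^\perp$ keeps the bounded pieces inside the tube. Uniformity of $t,r$ over $\standardcal K$ is exactly what makes $\delta$ independent of the direction.

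Once $\mc G^R_e$ codes $\mc G^{R+\delta}_e$, iterating (as in the proof of Lemma \ref{BL_Lemma3.3}) gives that $\mc G^R_e$ codes $\mc H$. Equivalently, restricting to bounded $V$-extent via Lemma \ref{BL_Lemma3.3}, we get the required coding of $\mc G^{R+\delta}_e(0)$ by some $\mc G^R_e(r')$. By Lemma \ref{For_vertical_euclidean_is_good}, $\mc G$ is then expansive, a contradiction. Hence some $W\in\standardcal K$ lies in $\mathbb N_{2D-1}$, and $W\times\mathbb R\supset\mc G$ is the desired $2D$-dimensional nonexpansive vertical group. If $k=2D-1$ already, $\standardcal K=\{V\}$ and the claim is trivial.
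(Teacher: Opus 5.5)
Your proof is correct and is essentially the paper's argument. You obtain uniform $t,r$ over the compact family $\standardcal K$ via Lemma~\ref{Just_like_BL}(\ref{BL_Lemma3.5}), then use a Euclidean covering showing that, for large $R$, right translates $(W^t(r)+\tilde v)\times\mathbb R\subset\mc G^R_e$ code $(W^{t+1}(0)+\tilde v)\times\mathbb R$, so that $\mc G^R_e$ codes $\mc G^{R+1/2}_e$, and finally iterate to code $\mc H$.

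One correction that does not affect the argument: in $\mc F^{t+1}_e(0)$ the bound is on $\pi_{W_n}$, not $\pi_V$. So $W_n^{t+1}(0)$ is the segment $\{sn:|s|\leqslant t+1\}$, not an unbounded slab. That segment, sticking out distance $1$ beyond the face of $W_n^t(r)+\tilde v$, is exactly the piece your covering uses.
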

	\begin{proof}
		Let $\standardcal K = \{ W \in \mathbb G_{2D-1}: V \subset W\}$. 
        Assume that for every $W \in \standardcal K$, $\mc F = W\times \mathbb R$ is expansive. The set $\standardcal K$ is compact in $\mathbb G_k$, so we can choose $t = t_{\standardcal K}$ and $r = r_{\standardcal K}$ as in (\ref{BL_Lemma3.5}). 
		
		Let $T_0 > 0$ be sufficiently big to meet the following: for each $v \in V^{T_0 + \frac 12}$, there is $W \in \standardcal K$ and $\tilde v \in \mathbb R^{2D}$ such that $W^{t}(r) + \tilde v \subset V^{T_0}$ and $v \in W^{t+1}(0) + \tilde v$. Since $\mc F_e^{t}(r) = W^t(r) \times \mathbb R$ codes $\mc F_e^{t+1}(0)$, we have that $\mc G_e^{T_0}$ codes $\mc G_e^{T_0+\frac12}$. By induction and using Remark \ref{coding_of_transformed_sets}, similarly to the proof of Lemma \ref{BL_Lemma3.3}, we obtain that $\mc G_e^{T_0}$ codes $\mc H$, which contradicts its nonexpansiveness.
	\end{proof}
	
	\begin{corollary}
		Since every subset of a nonexpansive set is nonexpansive, Theorem \ref{BL_Thm3.6} states that the family of $2D$-dimensional vertical groups determines all nonexpansive vertical groups in $\mc H$. Precisely: a vertical group $\mc G$ is nonexpansive if and only if it is contained in some $2D$-dimensional nonexpansive vertical group.
	\end{corollary}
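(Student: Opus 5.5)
The corollary claims two directions, and I will treat them separately.

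For the "if" direction: suppose $\mc G \subset \mc F$ with $\mc F$ a nonexpansive $2D$-dimensional vertical group. By the remark following Definition \ref{def_expansive}, every subset of a nonexpansive set is nonexpansive. Hence $\mc G$ is nonexpansive. Equivalently, if $\mc G$ were expansive, its superset $\mc F$ would be expansive too.

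For the "only if" direction, let $\mc G = V\times\mathbb R$ be a nonexpansive vertical group, and split by the dimension of $\mc G$.

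If $\dim \mc G \leqslant 2D$, the case is covered by Theorem \ref{BL_Thm3.6}. Here $\dim V\leqslant 2D-1$, and the theorem supplies a $2D$-dimensional nonexpansive vertical group containing $\mc G$. In the extreme case $V=\{0\}$ we have $\mc G=\mc Z$. This is allowed in the theorem, since the compact family $\standardcal K$ is then the whole Grassmannian $\mathbb G_{2D-1}$, so no separate argument is needed.

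If $\dim \mc G = 2D+1$, then $\mc G=\mc H$. I would show this case cannot occur, i.e.\ that $\mc H$ is expansive. Indeed, $\mc H^t=\mc H$ for every $t>0$, so $\rho^{\mc H^t}(\x,\y)=\rho^{H}(\x,\y)$. By the standing assumption that the action is expansive with constant $\eta$, the bound $\rho^{H}(\x,\y)\leqslant\eta$ forces $\x=\y$. Taking $\varepsilon=\eta$ in Definition \ref{def_expansive} shows $\mc H$ is expansive, so a nonexpansive vertical group always has dimension at most $2D$.

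The main care point is the dimension bookkeeping. Vertical dimension is $\dim V+1$, and Theorem \ref{BL_Thm3.6} is stated for dimension $k\leqslant 2D-1$. Its proof, however, only uses $V\subset W$ for some $W\in\mathbb G_{2D-1}$, which is possible exactly when $\dim V\leqslant 2D-1$. That condition covers every proper vertical group. If $\dim V=2D-1$, then $\mc G$ is already $2D$-dimensional and we may take $\mc G$ itself as the containing group.
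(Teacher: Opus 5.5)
Your proof is correct and follows the paper's reasoning. The ``if'' direction is the monotonicity remark (every subset of a nonexpansive set is nonexpansive), and the ``only if'' direction is Theorem \ref{BL_Thm3.6}. Your extra bookkeeping also fills in edge cases the paper leaves implicit: $\mc H$ itself is expansive because the action is assumed expansive, and a nonexpansive $2D$-dimensional $\mc G$ is its own containing group.
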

	\begin{theorem}    \label{nonexp2D}
		If $\mathbbm X$ is infinite, there is always at least one nonexpansive $2D$-dimensional vertical group. 
	\end{theorem}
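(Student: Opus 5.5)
The plan is to combine Theorem \ref{Z_never_expansive} with Theorem \ref{BL_Thm3.6}. Since $\mathbbm X$ is infinite, Theorem \ref{Z_never_expansive} shows that the vertical axis $\mc Z = \{0\}\times\mathbb R$ is nonexpansive. This is a vertical group: it has the form $V\times\mathbb R$ with $V=\{0\}$, the trivial subspace of $\mathbb R^{2D}$, and it is normal by Lemma \ref{normality_characterization}. Its dimension is $1\leqslant 2D-1$, because $D\geqslant 1$.

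We then apply Theorem \ref{BL_Thm3.6} to $\mc G=\mc Z$. It gives a $2D$-dimensional nonexpansive vertical group containing $\mc Z$, which is exactly the claim.

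The one point needing care is the dimension convention. In the proof of Theorem \ref{BL_Thm3.6}, the family $\standardcal K$ consists of the $W\in\mathbb G_{2D-1}$ containing $V$. With $V=\{0\}$ this is all of $\mathbb G_{2D-1}$, which is compact. So the argument runs verbatim: assuming every $W\times\mathbb R$ with $W\in\mathbb G_{2D-1}$ is expansive, Lemma \ref{Just_like_BL}(\ref{BL_Lemma3.5}) gives uniform constants $t,r$. Covering $V^{T_0+1/2}$ by translates of the sets $W^{t+1}(0)$ then shows that $\mc Z^{T_0}_e$ codes $\mc H$, contradicting the nonexpansiveness of $\mc Z$.

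I expect no real obstacle beyond this check. The hard work was already done in Theorem \ref{Z_never_expansive}, where the commutator relation $xy=zyx$ lets a finite vertical segment code longer ones. The only remaining concern is the geometric covering step of Theorem \ref{BL_Thm3.6} in the degenerate case $V=\{0\}$. Here $V^{T_0}$ is just a Euclidean ball of radius $T_0$ in $\mathbb R^{2D}$. Every point of $V^{T_0+1/2}$ lies in a translate of a slab $W^{t+1}(0)$ for a hyperplane $W$ orthogonal to its position vector. The corresponding translate of $W^t(r)$ sits inside the ball once $T_0$ is large relative to $t$ and $r$. So the covering is elementary, and the theorem follows.
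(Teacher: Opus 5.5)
Your proposal is correct and matches the paper's proof: the paper likewise takes the nonexpansiveness of $\mc Z$ from Theorem \ref{Z_never_expansive} and applies Theorem \ref{BL_Thm3.6} to it. Your extra check of the degenerate case $V=\{0\}$ is sound, with one small correction: $W^{t+1}(0)$ is a segment along the normal line $W^{\perp}$, not a slab, and this does not affect your covering argument.
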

	\begin{proof}
		By Theorem \ref{BL_Thm3.6} we only need the existence of some nonexpansive vertical group. That is guaranteed by Theorem \ref{Z_never_expansive} ($\mc Z$ is the smallest vertical group).
	\end{proof}

%%%%%%%%%%%%%%%%%%%%%%%%%%%%%%%%%%%%%%%%%%%%%%%%%%%%%%%%%%%%%%%%%%%%%%%%%%%%%%%%%%%%%%%%%%%%%%%%%%%%%%%%%%%%%%%%%%%%%%%%%%%%%%%%%%%%%%%%%%%%%%%%%%%%%%%%%%%%%%%%%%%%    
\end{document}